\numberwithin{equation}{section}
\newcommand{\eq}{\begin{equation}}
\newcommand{\en}{\end{equation}}
\newcommand{\lb}[1]{\label{#1}}
\DeclareSymbolFont{bbold}{U}{bbold}{m}{n}
\DeclareSymbolFontAlphabet{\mathbbold}{bbold}
\newcommand{\ind}{\mathbbold{1}}
\newcommand{\Rmnum}[1]{\expandafter\@slowromancap\romannumeral #1@}
\newtheorem{thm}{Theorem}[section]
\newtheorem{prop}[thm]{Proposition}
\newtheorem{lem}[thm]{Lemma}
\theoremstyle{definition}
\newtheorem{remark}[thm]{Remark}
\newtheorem{defn}[thm]{Definition}
\newtheorem{notation}[thm]{Notation}
\numberwithin{equation}{section}
\numberwithin{figure}{section}
\newcommand{\reals}{\mathbb{R}}
\newcommand{\nats}{\mathbb{N}}
\newcommand{\lra}{\leftrightarrow}
\newcommand{\ra}{\rightarrow}
\renewcommand{\P}{\mathbb{P}}
\newcommand{\Q}{\mathbb{Q}}
\newcommand{\E}{\mathbb{E}}
\newcommand{\ed}{\,{\buildrel d \over =}\,}
\renewcommand{\and}{ \quad \text{and} \quad }
\begin{document}

\title[]{Two continua of embedded regenerative sets}

\author{Steven N. Evans}
\address{Department of Statistics \#3860 \\ 367 Evans Hall \\ University of California at Berkeley \\ Berkeley, CA 94720-3860 \\ USA}
\email{evans@stat.berkeley.edu}
\thanks{SNE supported in part by NSF grant DMS-1512933.}

\author{Mehdi Ouaki}
\address{Department of Statistics \#3860 \\ 367 Evans Hall \\ University of California at Berkeley \\ Berkeley, CA 94720-3860 \\ USA}
\email{mouaki@berkeley.edu}
\thanks{}

\subjclass[2010]{60G51, 60G55, 60J65}

\keywords{L\'evy process, fluctuation theory,  subordinator, Lipschitz minorant}

\begin{abstract}
Given a two-sided real-valued L\'evy process $(X_t)_{t \in \reals}$, 
define processes  $(L_t)_{t \in \reals}$ and $(M_t)_{t \in \reals}$ 
by 
$L_t := \sup\{h \in \reals : h - \alpha(t-s) \le X_s \text{ for all } s \le t\} 
= \inf\{X_s + \alpha(t-s) : s \le t\}$, $t \in \reals$,
and 
$M_t  := \sup \{ h \in \reals : h - \alpha|t-s| \leq X_s  \text{ for all } s \in \reals \} 
= \inf \{X_s + \alpha |t-s| : s \in \reals\}$, $t \in \reals$.
The corresponding contact sets are the random sets 
$\mathcal{H}_\alpha := \{ t \in \reals : X_{t}\wedge X_{t-} = L_t\}$
and
$\mathcal{Z}_\alpha := \{ t \in \reals : X_{t}\wedge X_{t-} = M_t\}$.
For a fixed $\alpha>\E[X_1]$ (resp. $\alpha>|\E[X_1]|$) the set $\mathcal{H}_\alpha$ (resp. $\mathcal{Z}_\alpha$) is non-empty, closed, unbounded above and below, stationary, and regenerative. 
The collections 
$(\mathcal{H}_{\alpha})_{\alpha > \E[X_1]}$ 
and 
$(\mathcal{Z}_{\alpha})_{\alpha > |\E[X_1]|}$
are increasing in $\alpha$ and the regeneration property is compatible with these inclusions in that each family is a continuum of embedded regenerative sets in the sense of Bertoin. We show that
$(\sup\{t < 0 : t \in \mathcal{H}_\alpha\})_{\alpha > \E[X_1]}$ 
is a c\`adl\`ag, nondecreasing, pure jump process with independent increments
and determine the intensity measure of the associated Poisson process of jumps.
We obtain a similar result for 
$(\sup\{t < 0 : t \in \mathcal{Z}_\alpha\})_{\alpha > |\beta|}$
when  $(X_t)_{t \in \reals}$ is a (two-sided) Brownian motion with drift $\beta$.
\end{abstract}

\maketitle

\section{Introduction}

Let $X = (X_t)_{t \in \reals}$ be a two-sided, real-valued L\'evy process on a complete probability space $(\Omega, \mathcal{F}, \mathbb{P})$.  
That is, $X$ has c\`adl\`ag paths and stationary, independent increments. Assume that $X_0=0$.  
Let $(\mathcal{F}_t)_{t \in \reals}$ be the natural filtration of $X$ augmented by the $\mathbb{P}$-null sets. 
Suppose that $\E[X_1^{+}]<\infty$ so that $\E[X_1]$ is well-defined (but possibly $-\infty$).

For $\alpha > \E[X_1]$ define a process  $(L_t)_{t \in \reals}$ by 
\[
L_t := \sup\{h \in \reals : h - \alpha(t-s) \le X_s \text{ for all } s \le t\} 
= \inf\{X_s + \alpha(t-s) : s \le t\},  \quad t \in \reals,
\]
and set 
\[
\mathcal{H}_\alpha := \{ t \in \reals : X_{t}\wedge X_{t-} = L_t\}.
\]
Equivalently,
\[
\mathcal{H}_{\alpha}:=\left\{ t \in \reals : X_{t}\wedge X_{t-}-\alpha t = \inf_{u \le t} ( X_u - \alpha u ) \right\}.
\]

By the strong law of large numbers for L\'evy processes (see, for example, \cite[Example 7.2]{kyprianou2006introductory})
\[
\lim_{t \rightarrow +\infty} \frac{X_t}{t}= \lim_{t \rightarrow -\infty} \frac{X_t}{t} = \E[X_1] \quad \text{a.s.}
\]
so that
\[
\lim_{t \rightarrow +\infty} X_t-\alpha t =-\infty \quad \text{a.s.}
\]
and
\[
\lim_{t \rightarrow -\infty} X_t-\alpha t =+\infty \quad \text{a.s.}
\]
It follows from Lemma~\ref{deterministic} below that $\mathcal{H}_\alpha$ is almost surely a non-empty, closed set that is unbounded above and below.

We show in Theorem~\ref{reg:thm} that $\mathcal{H}_\alpha$ is a {\em regenerative set} in the sense of \cite{fitztaksar}.
Moreover, we observe in Lemma~\ref{L:H_alpha_increasing} that
\[
\mathcal{H}_{\alpha_1} \subseteq \mathcal{H}_{\alpha_2} \subseteq \dots \subseteq \mathcal{H}_{\alpha_n}
\]
for
\[
\E[X_1]<\alpha_1<\alpha_2< \dots <\alpha_n.
\]
In Section~\ref{S:reg_emb_gen} we recall from \cite{bertoin99} the notion of {\em regenerative embeddings}
and establish in Proposition~\ref{P:H_alpha_reg_emb} that these embeddings are regenerative.
As a consequence, we derive the following result which we prove in Section~\ref{S:H_alpha_reg_emb}.

\begin{thm}
\lb{ind:onesided}
For $\alpha> \E[X_1]$ set
\[
G_{\alpha}:=\sup \{t<0 : t \in \mathcal{H}_{\alpha}\}.
\]
Then $(G_{\alpha})_{\alpha > \E[X_1]}$ is a nondecreasing, c\`adl\`ag,  pure jump process with independent increments. 
The point process
\[
\{ (\alpha, G_{\alpha}-G_{\alpha-}): G_{\alpha}-G_{\alpha-}>0\}
\]
is a Poisson point process on $(\E[X_1],\infty) \times (0,\infty)$ with intensity measure
\[
 \gamma(dx \times dt)=t^{-1}\mathbb{P}\left\{\frac{X_t}{t} \in dx\right\} \, dt \; \ind_{\{t >0, \, x > \E[X_1]\}}.
\]
\end{thm}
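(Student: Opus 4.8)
The plan is to reduce the statement to a pathwise description of $G_\alpha$ and then to read off all of the asserted structure from the Poissonian description of the faces of the convex minorant of a L\'evy process. First I would record that, almost surely and simultaneously for all $\alpha > \E[X_1]$,
\[
G_\alpha = \operatorname*{arg\,min}_{u \le 0}\left(X_u - \alpha u\right)
\]
whenever this minimizer is negative; here $Y^\alpha_u := X_u - \alpha u \to +\infty$ as $u \to -\infty$ by the strong law already invoked in the excerpt, so the minimizer exists and is finite. The identification of the last contact point before $0$ with this minimizer uses the characterization $X_t \wedge X_{t-} - \alpha t = \inf_{u\le t}(X_u - \alpha u)$ of $\mathcal{H}_\alpha$ together with Lemma~\ref{deterministic}: at a minimizer $\sigma$ one has $X_{\sigma-} \ge X_\sigma$ (otherwise the left limit of $Y^\alpha$ would lie strictly below $Y^\alpha_\sigma$), so $\sigma \in \mathcal{H}_\alpha$, while $Y^\alpha$ stays strictly above its running infimum on $(\sigma, 0]$, ruling out any later contact point.

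Next I would time-reverse. The process $R_v := -X_{(-v)-}$, $v \ge 0$, is a L\'evy process with $R \ed X$, and the change of variable $u = -v$ converts the minimization of $X_u - \alpha u$ over $u \le 0$ into the maximization of $R_v - \alpha v$ over $v \ge 0$. Hence $-G_\alpha$ is precisely the abscissa, at support slope $\alpha$, of the vertex of the concave majorant of $(R_v)_{v \ge 0}$. Writing the faces of this concave majorant as segments of slope $s_i$ and horizontal length $\ell_i$, and noting that slopes decrease as $v$ increases, this gives the exact representation
\[
-G_\alpha = \sum\left\{\ell_i : s_i > \alpha\right\}.
\]
From this, monotonicity (consistent with Lemma~\ref{L:H_alpha_increasing}), the c\`adl\`ag property and the pure-jump property are immediate: the right-hand side is an increasing pure-jump function of $\alpha$ that decreases by $\ell_i$ as $\alpha$ crosses $s_i$, so $G_\alpha$ jumps up by $\ell_i$ at $\alpha = s_i$, once the boundary conventions and the $X \wedge X_-$ in the definition of $\mathcal{H}_\alpha$ are matched to make $\alpha \mapsto G_\alpha$ right-continuous. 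These structural facts are also available from Proposition~\ref{P:H_alpha_reg_emb} together with Bertoin's renewal theory for embedded regenerative sets applied to the last-passage functional $G_\alpha = \sup(\mathcal{H}_\alpha \cap (-\infty,0))$.

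For the independent increments and the intensity I would invoke the Poissonian description of the faces of the convex minorant of a L\'evy process due to Pitman and Uribe Bravo: the faces of the concave majorant of $R$ on $[0,\infty)$ form a Poisson point process whose intensity, in the coordinates (horizontal length $t$, vertical increment $y$), is $t^{-1}\,\P\{X_t \in dy\}\,dt$. Reparametrizing by the slope $x = y/t$---so that the increment of $R$ across a face of length $t$ has average slope distributed as $X_t/t$---turns this into $t^{-1}\,\P\{X_t/t \in dx\}\,dt$, carried by $\{x > \E[X_1]\}$ because the face slopes of a concave majorant exceed its asymptotic slope $\E[R_1] = \E[X_1]$. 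Since the jump of $G_\alpha$ at $\alpha$ is exactly the length of the face of slope $\alpha$, the marked point process $\{(\alpha, G_\alpha - G_{\alpha-}) : G_\alpha - G_{\alpha-} > 0\}$ coincides with this Poisson point process, which is $\gamma$; and since faces with slopes in disjoint intervals are independent, $(G_\alpha)$ has independent increments.

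The main obstacle is this last step: the Poissonian face description is usually stated for a finite horizon or a one-sided process, so transporting it to the infinite-horizon concave majorant of the reversed two-sided process requires a limiting/consistency argument, and the change of variables to the slope coordinate together with the support condition $x > \E[X_1]$ must be justified uniformly in $\alpha$, including the degenerate case $\E[X_1] = -\infty$. A secondary technical point is the behaviour at the boundary $0$---whether $\mathcal{H}_\alpha$ accumulates at $0$ for a given $\alpha$---which must be reconciled with the right-continuity convention for $G_\alpha$; this is a null effect for the jump measure but needs care in the pathwise identity.
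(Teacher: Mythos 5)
Your proposal is correct in outline, but it takes a genuinely different route from the paper. You reduce the theorem to the Pitman--Uribe Bravo Poissonian description of the faces of the convex minorant: after time reversal, $-G_\alpha$ is the abscissa of the vertex of the concave majorant of $(-X_{(-v)-})_{v\ge 0}$ at slope $\alpha$, the jumps of $G$ are the face lengths, and the intensity $t^{-1}\P\{X_t/t\in dx\}\,dt$ is the face intensity in slope coordinates. The paper instead stays inside its own regenerative framework: Proposition~\ref{P:H_alpha_reg_emb} shows the sets $\mathcal{H}_{\alpha_1}\subseteq\cdots\subseteq\mathcal{H}_{\alpha_n}$ are regeneratively embedded; Theorem~\ref{ind:emb} (Bertoin's renewal theory for embedded regenerative sets) then yields that $(G_{\alpha_2}-G_{\alpha_1},\ldots,-G_{\alpha_n})$ has the product law $\nu_1\otimes\cdots\otimes\nu_n$ with $\int e^{-\lambda x}\,\nu_i(dx)=c_i\Phi_{\alpha_i}(\lambda)/\Phi_{\alpha_{i+1}}(\lambda)$; and Fristedt's formula (Corollary VI.10 of \cite{bertoin}), $\Phi_\alpha(\lambda)=\exp\bigl(\int_0^\infty(e^{-t}-e^{-\lambda t})t^{-1}\P\{X_t\ge\alpha t\}\,dt\bigr)$, identifies each $\nu_i$ as infinitely divisible with L\'evy measure $t^{-1}\P\{\alpha_i t\le X_t\le\alpha_{i+1}t\}\,dt$, after which L\'evy--Khinchin--It\^o gives the Poisson description. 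The paper deliberately relegates the convex-minorant picture to a closing remark, as a consistency check against \cite{pitmanbravo} rather than as the proof. Your route buys a transparent geometric explanation of the intensity, but it imports the infinite-horizon version of the face description and the boundary/convention matching that you flag yourself; the paper's route obtains independence of increments and the jump intensity simultaneously from machinery it has already built and reuses verbatim for Theorem~\ref{lip:ind}, where no convex-minorant description is available. Two small caveats on your write-up: your claim that $X_{\sigma-}\ge X_\sigma$ at the last minimizer is neither quite correct nor needed, since membership of a minimizer in $\mathcal{H}_\alpha$ is automatic from the $X_t\wedge X_{t-}$ in the definition; and your pathwise identity should also dispose of the event that $0$ itself is a contact point isolated on the left (a null event by stationarity, since gap right-endpoints form a countable set), which is the only way $G_\alpha$ could differ from the last minimizer location.
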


The set $\mathcal{H}_\alpha$ is obviously closely related to the {\em ladder time} 
\[
\mathcal{R}_{\alpha}:= \left\{ t \in \reals : X_t-\alpha t =\inf_{u \le t} (X_u-\alpha u)\right\}
\]
of the L\'evy process $(X_t-\alpha t)_{t \in \reals}$.
We clarify the connection with the following result which is proved in Section~\ref{S:ladder_times}.

\begin{prop}
\lb{P:ladder_times}
The following hold almost surely.
\begin{itemize}
\item[(i)]
$\mathcal{R}_{\alpha} \subseteq \mathcal{H}_\alpha$.
\item[(ii)]
$\mathcal{R}_{\alpha}$ is closed from the right.
\item[(iii)]
$\mathbf{cl}(\mathcal{R}_{\alpha})=\mathcal{H}_{\alpha}$.
\item[(iv)]
$\mathcal{H}_{\alpha} \setminus \mathcal{R}_{\alpha}$ consists of points in $\mathcal{H}_{\alpha}$ that are isolated on the right
and so, in particular, this set is countable.
\end{itemize}
\end{prop}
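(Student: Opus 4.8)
Throughout the proof I write $Y_t := X_t - \alpha t$, so that $(Y_t)_{t\in\reals}$ is a two-sided \levy{} with $\E[Y_1]=\E[X_1]-\alpha<0$, and I let $\underline{Y}_t := \inf_{s\le t}Y_s$ denote its running infimum; this is finite for every $t$ because $Y_t\to-\infty$ as $t\to+\infty$ and $Y_t\to+\infty$ as $t\to-\infty$. In this notation $\mathcal{R}_\alpha=\{t:Y_t=\underline{Y}_t\}$ and $\mathcal{H}_\alpha=\{t:Y_t\wedge Y_{t-}=\underline{Y}_t\}$. The first things I would record are that $t\mapsto\underline{Y}_t$ is nonincreasing and right-continuous (as $u\downarrow t$ one has $\inf_{t<s\le u}Y_s\to Y_t$, whence $\underline{Y}_u\to\min(\underline{Y}_t,Y_t)=\underline{Y}_t$), and that $Y_{t-}\ge\underline{Y}_t$ for every $t$, since $Y_{t-}$ is a limit of values $Y_s$, $s<t$, each at least $\underline{Y}_t$. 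Part (i) is then immediate: if $t\in\mathcal{R}_\alpha$ then $Y_t=\underline{Y}_t$, and combining this with $Y_{t-}\ge\underline{Y}_t$ gives $Y_t\wedge Y_{t-}=\underline{Y}_t$, i.e.\ $t\in\mathcal{H}_\alpha$. For part (ii), if $t_n\downarrow t$ with $t_n\in\mathcal{R}_\alpha$, then $Y_{t_n}=\underline{Y}_{t_n}$, and letting $n\to\infty$ and using right-continuity of both $Y$ and $\underline{Y}$ yields $Y_t=\underline{Y}_t$, so $t\in\mathcal{R}_\alpha$.

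The key structural observation, which I would isolate next and which drives both (iii) and (iv), is a description of the difference set: a point $t$ lies in $\mathcal{H}_\alpha\setminus\mathcal{R}_\alpha$ \iff{} $Y_t>\underline{Y}_t$ and $Y_t\wedge Y_{t-}=\underline{Y}_t$, which (since $Y_{t-}\ge\underline{Y}_t$) forces $Y_{t-}=\underline{Y}_t=\inf_{s<t}Y_s<Y_t$. Thus every point of $\mathcal{H}_\alpha\setminus\mathcal{R}_\alpha$ is the time of a strictly positive jump of $Y$ whose left limit coincides with the running infimum. Part (iv) follows quickly from this: given such a $t$, right-continuity of $Y$ produces $\delta>0$ and $m'>\underline{Y}_t$ with $Y_s\ge m'$ for all $s\in[t,t+\delta)$, so that $Y_s\wedge Y_{s-}\ge m'>\underline{Y}_t\ge\underline{Y}_s$ for $s\in(t,t+\delta)$, showing $(t,t+\delta)\cap\mathcal{H}_\alpha=\emptyset$. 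Hence $t$ is isolated on the right in $\mathcal{H}_\alpha$, and a set of points each isolated on the right in $\reals$ is necessarily countable (inject it into $\Q$ by sending $t$ to a rational in its right gap).

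For part (iii), the inclusion $\mathbf{cl}(\mathcal{R}_\alpha)\subseteq\mathcal{H}_\alpha$ holds because $\mathcal{R}_\alpha\subseteq\mathcal{H}_\alpha$ by (i) and $\mathcal{H}_\alpha$ is closed. For the reverse inclusion it suffices, by the description above, to show that every $t\in\mathcal{H}_\alpha\setminus\mathcal{R}_\alpha$ is a left limit of points of $\mathcal{R}_\alpha$. Fix such a $t$. If ladder times accumulate at $t$ from the left we are done, so the only thing to exclude is the ``bad'' configuration in which $(t-\delta,t)\cap\mathcal{R}_\alpha=\emptyset$ for some $\delta>0$; unwinding the definitions, this forces $\underline{Y}$ to be constant on $(t-\delta,t)$, equal to $\inf_{s\le t-\delta}Y_s=:m$ (a value attained since $Y_s\to+\infty$ as $s\to-\infty$), with $Y_s>m$ on $(t-\delta,t)$ and $Y_{t-}=m$. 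In words, $Y$ must return to an infimum value already attained at a time $\le t-\delta$, reaching it only in the left limit at $t$, from strictly above, and then jump up. I expect this to be the main obstacle, since it is genuinely probabilistic: the analogous deterministic statement is false (one can draw a piecewise-linear path realising the bad configuration), so the argument must invoke the \levy{} structure to show the coincidence $Y_{t-}=m$ has probability zero.

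My plan for this last step is to estimate, via the compensation (L\'evy system) formula applied to the Poisson random measure of jumps of size exceeding $\eps$, the expected number of times $t$ in a bounded interval at which the bad configuration occurs; this expectation factors as $\nu((\eps,\infty))\int\P(A_t)\,dt$, where $A_t$ is the corresponding event for the path strictly before $t$, so it remains to check $\P(A_t)=0$ for (almost) every fixed $t$. Here I would reverse time at $t$, replacing $(Y_s)_{s<t}$ by the dual process $\widetilde{Y}_v:=Y_{t-}-Y_{(t-v)-}$, $v\ge0$, which is a \levy{} with the law of $Y$. Under this reversal $A_t$ becomes the event that $\widetilde{Y}$ stays $\le 0$ on $(0,\infty)$, is strictly negative on some initial interval $(0,\delta)$, and yet returns to the value $0$ at some later time. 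The strictly negative initial stretch forces $0$ to be irregular for $(0,\infty)$, so $\widetilde{Y}$ has no Gaussian component and cannot creep upward across levels; consequently a return to the level $0$ from within $(-\infty,0]$ can occur only through a jump that lands exactly at $0$, an event of probability zero. This gives $\P(A_t)=0$, hence almost surely no bad configuration occurs, and the reverse inclusion follows. An alternative route I would keep in reserve phrases the same fact through It\^o's excursion theory for the reflected process $Y-\underline{Y}$: one shows its excursion measure assigns no mass to excursions possessing an interior time with left limit $0$, which is precisely the exclusion of the bad configuration.
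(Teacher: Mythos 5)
Your handling of (i) and (ii) is correct, and your argument for (iv) is a clean, purely deterministic one: from $Y_{t-}=\underline{Y}_t<Y_t$ at any $t\in\mathcal{H}_\alpha\setminus\mathcal{R}_\alpha$ (with $Y_s=X_s-\alpha s$ and $\underline{Y}_t=\inf_{u\le t}Y_u$), right-continuity really does clear out a right neighbourhood of $t$ from $\mathcal{H}_\alpha$. This is, if anything, more self-contained than the paper's proof of (iv), which routes through the machinery of (iii). The problem is (iii), which you correctly identify as the crux but do not finish. Two things are missing. First, in the reduction, the claim that $(t-\delta,t)\cap\mathcal{R}_\alpha=\emptyset$ forces $\underline{Y}$ to be constant on $(t-\delta,t)$ needs its own argument: non-constancy of $\underline{Y}$ on a subinterval immediately yields a point of $\mathcal{H}_\alpha$ there, not obviously one of $\mathcal{R}_\alpha$; you must observe that the first passage time of $Y$ strictly below a previously attained running infimum always belongs to $\mathcal{R}_\alpha$ (because at such a time $v$ one has $Y_v\le\underline{Y}_{v-}$). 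Second, and more seriously, the final probabilistic step is only a plan, and its key assertion --- that after time reversal ``a return to the level $0$ from within $(-\infty,0]$ can occur only through a jump that lands exactly at $0$, an event of probability zero'' --- is not established. The earlier attainment of the infimum $m$ may be only as a left limit, so the reversed process may return to $0$ as a left limit $\widetilde{Y}_{v-}=0$ rather than as a value, a case the jump/creeping dichotomy does not cover; the ``probability zero'' for a jump landing exactly at a level rests (via the compensation formula) on the one-dimensional laws being atomless, which fails exactly for compound Poisson processes with drift, a case the proposition as stated in Section 3 does not exclude and which therefore needs separate (if easy) treatment; and the level being revisited is the starting level $\widetilde{Y}_0=0$ itself, so the creeping and overshoot statements, which concern levels $x>0$, must be applied after restarting from a strictly negative position. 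What you are really after is the a.s.\ uniqueness of the location of the infimum of a L\'evy path, which is true, but as written the proof is incomplete.

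It is also worth noting that the paper takes a much shorter and entirely different route to (iii): it recycles the fact, already proved in Theorem~\ref{reg:thm} via the approximating stopping times $D^{(n)}$ and quasi-left continuity, that $X_D\le X_{D-}$ a.s.\ for $D=\inf\{s>0:s\in\mathcal{H}_\alpha\}$, so that $D\in\mathcal{R}_\alpha$ a.s.; by stationarity the debut $D_r$ of $\mathcal{H}_\alpha$ after $r$ lies in $\mathcal{R}_\alpha$ simultaneously for all rational $r$, and then any $t\in\mathcal{H}_\alpha$ is approximated by the points $D_{r_n}\in[r_n,t]\cap\mathcal{R}_\alpha$ for rationals $r_n\uparrow t$. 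That argument buries the probabilistic difficulty in a single quasi-left-continuity computation and avoids the compensation formula, time reversal, and creeping dichotomy altogether; I would either adopt it or complete your reversal argument along the lines indicated above.
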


Given $\alpha>0$, denote by $(M_t)_{t \in \reals}$ be the $\alpha$-Lipschitz minorant of the two-sided L\'evy process $(X_t)_{t \in \reals}$;
that is, $t \mapsto M_t$ is the greatest $\alpha$-Lipschitz function dominated by $t \mapsto X_t$ (our notation suppresses the dependence of $M$
on $\alpha$).
We refer the reader to \cite{zbMATH06288068} and \cite{1905.07038} for extensive investigations of the Lipschitz minorant of a L\'evy process.
The $\alpha$-Lipschitz minorant exists if
\begin{equation*}
\lb{condi}
\E[\vert X_1 \vert]< \infty \text{ and } \alpha > \vert \E[X_1] \vert
\end{equation*}
and we suppose that these conditions hold when discussing $(M_t)_{t \in \reals}$.
Then, 
\[
M_t  = \sup \{ h \in \reals : h - \alpha|t-s| \leq X_s  \text{ for all } s \in \reals \} 
= \inf \{X_s + \alpha |t-s| : s \in \reals\}, \quad t \in \reals.
\]

Set
\[
\mathcal{Z}_{\alpha}:=\{ t \in \reals: X_t \wedge X_{t-}=M_t \}.
\]
It is shown in \cite{zbMATH06288068}[Theorem 2.6] that this set is closed, unbounded above and below, stationary, and regenerative.
We establish in Proposition~\ref{P:Z_alpha_reg_emb} that 
\[
\mathcal{Z}_{\alpha_1} \subseteq \dots \subseteq \mathcal{Z}_{\alpha_n}
\]
for $\vert \E[X_1] \vert < \alpha_1<\dots<\alpha_n$ and that these embeddings are regenerative.
As a consequence, we derive the following result which is proved in Section~\ref{S:Z_alpha_reg_emb}.

\begin{thm}
\lb{lip:ind}
Suppose that $(X_t)_{t \in \reals}$ is a two-sided standard Brownian motion with drift $\beta$
For $\alpha> |\beta|$ set
\[
Y_{\alpha}:=\sup \{t<0 : t \in \mathcal{Z}_{\alpha}\}.
\]
Then $(Y_{\alpha})_{\alpha > |\beta|}$ is a nondecreasing, c\`adl\`ag,  pure jump process with independent increments. 
The point process
\[
\{ (\alpha, Y_{\alpha}-Y_{\alpha-}): Y_{\alpha}-Y_{\alpha-}>0\}
\]
is a Poisson point process on $(|\beta|,\infty) \times (0,\infty)$ with intensity measure
\[
\gamma(ds \times dr)=\frac{\phi(\frac{\sqrt{r}}{s-\beta})+\phi(\frac{\sqrt{r}}{s+\beta})}{\sqrt{r}} \, ds \, dr \; \ind_{\{s>\vert \beta \vert, r>0\}},
\]
where $\phi(x):=\frac{e^{-\frac{x^2}{2}}}{\sqrt{2\pi}}$, for $x>0$.
\end{thm}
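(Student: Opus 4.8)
The plan is to follow the same route as in the proof of Theorem~\ref{ind:onesided}, replacing the one-sided ladder structure by the two-sided contact set and then carrying out an explicit Gaussian computation in the Brownian case. \emph{Step 1 (qualitative properties).} By Proposition~\ref{P:Z_alpha_reg_emb} the family $(\mathcal{Z}_\alpha)_{\alpha>|\beta|}$ is an increasing continuum of regeneratively embedded regenerative sets in the sense of \cite{bertoin99}. Monotonicity of $\alpha\mapsto Y_\alpha$ is immediate from $\mathcal{Z}_{\alpha_1}\subseteq\mathcal{Z}_{\alpha_2}$. The remaining assertions — that $\alpha\mapsto Y_\alpha$ is c\`adl\`ag, has no continuous part, and has independent increments, with its upward jumps forming a Poisson point process — follow essentially verbatim from the argument establishing the corresponding statements for $(G_\alpha)$ in Theorem~\ref{ind:onesided}, since that argument uses only the regenerative-embedding property and not the specific one-sided structure. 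Thus it remains to identify the intensity measure $\gamma$ of the jump process when $X$ is Brownian motion with drift $\beta$.

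\emph{Step 2 (reduction to a marginal law).} Because $(Y_\alpha)$ is an increasing, pure-jump process with independent increments whose jumps form a Poisson point process $N$ with intensity $\gamma$, and because $Y_\alpha\uparrow 0$ as $\alpha\to\infty$, the variable $-Y_\alpha$ is the sum $\int_{(\alpha,\infty)\times(0,\infty)} r\,N(ds\times dr)$ over the points lying above level $\alpha$. Campbell's formula then gives
\[
\E\!\left[e^{\theta Y_\alpha}\right]
=\exp\!\left(-\int_{(\alpha,\infty)\times(0,\infty)}\bigl(1-e^{-\theta r}\bigr)\,\gamma(ds\times dr)\right),\qquad \theta\ge 0 .
\]
Hence $\gamma$ is determined by the one-dimensional marginal laws of $Y_\alpha$: differentiating $\log\E[e^{\theta Y_\alpha}]$ in $\alpha$ yields $\int_{(0,\infty)}(1-e^{-\theta r})\,\gamma_\alpha(dr)$, where $\gamma(ds\times dr)=\gamma_s(dr)\,ds$, which one inverts in $\theta$ to recover $\gamma_\alpha$. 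So it suffices to compute, for each $\alpha>|\beta|$, the law of $-Y_\alpha$, the distance from $0$ back to the last contact point of the $\alpha$-Lipschitz minorant.

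\emph{Step 3 (the Brownian computation).} Here I would use the explicit description of the $\alpha$-Lipschitz minorant of two-sided Brownian motion with drift from \cite{zbMATH06288068}. The point $Y_\alpha$ is the left endpoint of the excursion interval of $\mathcal{Z}_\alpha$ straddling $0$; decomposing this straddling gap into its left ($-Y_\alpha$) and right parts via the Palm/renewal structure of the stationary regenerative set $\mathcal{Z}_\alpha$ reduces the problem to first-passage computations for Brownian motion with drift. A contact point is simultaneously a backward record of $s\mapsto X_s-\alpha s$ and a forward record of $s\mapsto X_s+\alpha s$; the two ways in which a new contact point is created as $\alpha$ increases — through the backward constraint or through the forward one — produce the two summands of $\gamma$. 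Under time reversal two-sided Brownian motion with drift $\beta$ becomes Brownian motion with drift $-\beta$, which is precisely why the second summand is the first with $\beta$ replaced by $-\beta$. Evaluating the relevant transition/first-passage densities in closed form then produces $\gamma(ds\times dr)=\bigl(\phi(\sqrt{r}/(s-\beta))+\phi(\sqrt{r}/(s+\beta))\bigr)r^{-1/2}\,ds\,dr$; one checks directly that $\phi(\sqrt{r}/(s\mp\beta))\,r^{-1/2}$ is exactly the Gaussian transition density of standard Brownian motion evaluated at the point $r/(s\mp\beta)$ at time $r$, which makes the fluctuation-theoretic origin of the answer transparent.

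\emph{Main obstacle.} The qualitative statements are inherited from Theorem~\ref{ind:onesided} and are routine once Proposition~\ref{P:Z_alpha_reg_emb} is in hand; the real work is Step~3. Unlike the one-sided case, where a jump of $G_\alpha$ corresponds to a chord of slope exactly $\alpha$ and the intensity involves the law of the increment $X_r/r$ directly, a jump of $Y_\alpha$ corresponds to the appearance of a new vertex of a two-sided $\alpha$-tent fitting under the path, and the resulting densities involve the \emph{reciprocal} slope $\sqrt{r}/(s\mp\beta)$ rather than $\sqrt{r}\,(s\mp\beta)$. Carrying out this tent/straddling-gap decomposition correctly for Brownian motion, keeping careful track of the forward/backward (equivalently $\pm\beta$) contributions and of the Palm size-biasing, and rigorously justifying the extraction of $\gamma$ from the marginal laws, is where the difficulty lies.
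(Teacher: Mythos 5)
Your Step 1 matches the paper: the qualitative assertions (monotone, independent increments, pure jump, Poissonian jumps) are indeed inherited from Proposition~\ref{P:Z_alpha_reg_emb} exactly as in Theorem~\ref{ind:onesided}. The problem is that the quantitative content of the theorem --- the identification of $\gamma$ --- is never actually derived. Your Step 3 is a plan, not a proof: you describe a tent/straddling-gap decomposition, Palm size-biasing, and first-passage computations, and then assert that ``evaluating the relevant densities in closed form then produces'' the stated intensity, while conceding in your final paragraph that carrying this out ``is where the difficulty lies.'' That concession is exactly the gap. Nothing in the proposal pins down why the answer involves the reciprocal slopes $\sqrt{r}/(s\mp\beta)$; the observation that $r^{-1/2}\phi(\sqrt{r}/(s\mp\beta))$ equals a Gaussian transition density is a consistency check on the answer, not a derivation of it.

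The missing ingredient, which the paper supplies, is the explicit Laplace exponent of the subordinator whose range is $\mathcal{Z}_\alpha$ for Brownian motion with drift $\beta$, namely
\[
\Phi_\alpha(\lambda)=\frac{4(\alpha^2-\beta^2)\lambda}{\bigl(\sqrt{2\lambda+(\alpha-\beta)^2}+\alpha-\beta\bigr)\bigl(\sqrt{2\lambda+(\alpha+\beta)^2}+\alpha+\beta\bigr)}
\]
from \cite[Proposition 8.1]{zbMATH06288068}. With this in hand, Theorem~\ref{ind:emb} gives $\E[e^{-\lambda(Y_{\alpha_2}-Y_{\alpha_1})}]=c\,\Phi_{\alpha_1}(\lambda)/\Phi_{\alpha_2}(\lambda)$ directly for each pair $\alpha_1<\alpha_2$ (this is cleaner than your Step 2, which recovers $\gamma$ by differentiating the one-point marginal in $\alpha$ and inverting a Laplace transform --- workable, but it still presupposes an explicit formula for the marginal, i.e.\ the same $\Phi_\alpha$). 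The paper then writes $\log\E[e^{-\lambda(Y_{\alpha_2}-Y_{\alpha_1})}]=f(a_3)+f(a_4)-f(a_1)-f(a_2)$ with $f(x)=-\log\bigl(1+\sqrt{2\lambda x^2+1}\bigr)$ and $a_i$ the reciprocals of $\alpha_j\pm\beta$, and uses the integral representation
\[
f(x)=-\int_{0}^{\infty}\bigl(1-e^{-\lambda r}\bigr)r^{-\frac{1}{2}}\int_{0}^x t^{-2}\phi\bigl(t\sqrt{r}\bigr)\,dt\,dr
\]
followed by a change of variables to read off the L\'evy measure; this is where the two summands in $\pm\beta$ and the factor $r^{-1/2}$ actually come from. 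Without either this computation or a completed version of your excursion-theoretic alternative, the theorem is not proved.
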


\section{Regenerative sets}
\label{S:reg_set}

We introduce the notion of a regenerative set in the sense of \cite{fitztaksar}.
For simplicity, we specialize the definition by only considering random sets defined on probability spaces
(rather than general $\sigma$-finite measure spaces).

\begin{notation}
Let $\Omega^\lra$ denote the class of closed subsets of $\reals$. For $t
\in \mathbb{R}$ and $\omega^\lra \in \Omega^\lra$, define
\[
d_t(\omega^\lra) := \inf \{ s>t: s \in \omega^\lra \}, \quad r_t(\omega^\lra) :=
d_t(\omega^\lra) - t,
\]
and
\[
\tau_t(\omega^\lra) := \mathbf{cl} \{ s-t:s \in \omega^\lra \cap (t,\infty) \}
= \mathbf{cl} \left( (\omega^\lra-t) \cap (0,\infty) \right) .
\]
Here $\mathbf{cl}$ denotes closure and we adopt the
convention $\inf \emptyset = + \infty$. 
Note that 
$t \in \omega^\lra$ if and only if 
$\lim_{s \uparrow t} r_s(\omega^\lra) = 0$, and so
$\omega^\lra \cap (-\infty,t]$ can be reconstructed from
$r_s(\omega^\lra)$, $s \le t$, for any $t \in \mathbb{R}$.
Set
$\mathcal{G}^\lra := \sigma \{ r_s : s \in \reals \}$ and $\mathcal{G}_t^\lra :=
\sigma \{ r_s:s \leq t \}$. Clearly, $(d_t)_{t \in \reals}$ is an
increasing c\`adl\`ag process adapted to the filtration
$(\mathcal{G}_t^\lra)_{t \in \reals}$, and $d_t \ge t$ for all $t \in \reals$.

Let $\Omega^\ra$ denote the class of closed subsets of $\reals_+$.  Define a $\sigma$-field $\mathcal{G}^\ra$ on $\Omega^\ra$ in the same manner that the $\sigma$-field $\mathcal{G}^\lra$ was defined on $\Omega^\lra$.
\end{notation}

\begin{defn}
A \emph{random set} is a measurable mapping $S$ from a measurable
space $(\Omega,\mathcal{F})$ into
$(\Omega^\lra,\mathcal{G}^\lra)$. 
\end{defn}

\begin{defn}
\label{def:regenset}
A probability measure $\Q^\lra$ on $(\Omega^\lra,\mathcal{G}^\lra)$
is regenerative with regeneration law $\Q^\ra$ a probability measure on $(\Omega^\ra,\mathcal{G}^\ra)$ if
\begin{itemize}
  \item[(i)] $\Q^\lra\{d_t = +\infty\} = 0$, for all $t \in \reals$;
  \item[(ii)] for all $t \in \reals$ and
    for all $\mathcal{G}^\lra$-measurable nonnegative functions $F$,
\eq
\lb{eq:regen_law}
\Q^\lra \left [F(\tau_{d_t}) \, | \, \mathcal{G}_{t+}^\lra \right] = \Q^\ra[F],
\en
where we write $\Q^\lra[\cdot]$ and $\Q^\ra[\cdot]$ for expectations
with respect to $\Q^\lra$ and $\Q^\ra$.
\end{itemize}
A random set $S$ defined on a probability space 
$(\Omega,\mathcal{F}, \P)$ is a regenerative set if the
push-forward of $\P$ by the map $S$ (that is,
the distribution of $S$) is a regenerative probability measure.

\end{defn}

\begin{remark}
\label{R:zero_enough}
Suppose that the probability measure $\Q^\lra$ on $(\Omega^\lra,\mathcal{G}^\lra)$
is stationary; that is, if $S^\lra$ is the identity map on $\Omega^\lra$,
then the random set $S^\lra$ on $(\Omega^\lra,\mathcal{G}^\lra, \Q^\lra)$
has the same distribution as $u+S^\lra$ for any $u \in \reals$ or, equivalently,
that the process $(r_t)_{t \in \reals}$ has the same distribution as
$(r_{t-u})_{t \in \reals}$ for any $u \in \reals$.  Then,
in order to check conditions (i) and (ii) of Definition ~\ref{def:regenset},
it suffices to check them for the case $t=0$.
\end{remark}

\begin{thm}
\lb{reg:thm}
The random set $\mathcal{H}_{\alpha}$ is stationary and regenerative.
\end{thm}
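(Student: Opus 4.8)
The plan is to derive stationarity from the translation covariance of the construction and then, thanks to Remark~\ref{R:zero_enough}, to verify the two conditions of Definition~\ref{def:regenset} only at the origin using the strong Markov property of $X$. For stationarity, fix $u \in \reals$ and put $X^{(u)}_t := X_{t+u} - X_u$, a two-sided L\'evy process with the same law as $X$. Writing $Y_t := X_t - \alpha t$ and $Y^{(u)}_t := X^{(u)}_t - \alpha t$, a direct computation gives $Y_{t+u} = Y_u + Y^{(u)}_t$ for all $t$, whence $\inf_{s \le t+u} Y_s = Y_u + \inf_{r \le t} Y^{(u)}_r$ and $Y_{t+u} \wedge Y_{(t+u)-} = Y_u + (Y^{(u)}_t \wedge Y^{(u)}_{t-})$. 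Cancelling the constant $Y_u$ shows that $t + u \in \mathcal{H}_\alpha(X)$ if and only if $t \in \mathcal{H}_\alpha(X^{(u)})$, i.e. $\mathcal{H}_\alpha(X) - u = \mathcal{H}_\alpha(X^{(u)})$. Since $X^{(u)} \ed X$, this yields $\mathcal{H}_\alpha - u \ed \mathcal{H}_\alpha$, the asserted stationarity; by Remark~\ref{R:zero_enough} it remains to check conditions (i) and (ii) at $t = 0$.

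Set $T := d_0 = \inf\{s > 0 : s \in \mathcal{H}_\alpha\}$. As $\{s \in \mathcal{H}_\alpha\}$ is $\mathcal{F}_s$-measurable for each $s$, the set $\mathcal{H}_\alpha$ has progressively measurable graph and the d\'ebut theorem makes $T$ a stopping time for the (augmented, right-continuous) filtration $(\mathcal{F}_t)$. Condition (i), $\P\{T = +\infty\} = 0$, follows from the almost sure unboundedness of $\mathcal{H}_\alpha$ above. Let $W_s := X_{T+s} - X_T$, $s \ge 0$; by the strong Markov property $(W_s)_{s \ge 0}$ is independent of $\mathcal{F}_T$ and distributed as $(X_s)_{s \ge 0}$.

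The crux is to express $\tau_T(\mathcal{H}_\alpha)$ through $W$ alone, and for this one needs that, almost surely, the running infimum $I_T := \inf_{s \le T} Y_s$ is attained by the right-hand value: $Y_T = I_T$, equivalently $T \in \mathcal{R}_\alpha$. Indeed $T$ is the leftmost point of $\mathcal{H}_\alpha \cap (0,\infty)$, so $(0,T)$ contains no point of $\mathcal{R}_\alpha \subseteq \mathcal{H}_\alpha$; if we had $T \notin \mathcal{R}_\alpha$, then by Proposition~\ref{P:ladder_times}(iv) $T$ would be isolated on the right in $\mathcal{H}_\alpha$, so $T$ would possess a punctured neighbourhood disjoint from $\mathcal{R}_\alpha$, contradicting $T \in \mathcal{H}_\alpha = \mathbf{cl}(\mathcal{R}_\alpha)$ from Proposition~\ref{P:ladder_times}(iii). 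Granting $Y_T = I_T$, for every $t > 0$,
\[
\inf_{s \le T + t} Y_s = Y_T + \inf_{0 \le r \le t}(W_r - \alpha r) \and Y_{T+t} \wedge Y_{(T+t)-} = Y_T + (W_t \wedge W_{t-} - \alpha t),
\]
so that $T + t \in \mathcal{H}_\alpha$ iff $W_t \wedge W_{t-} - \alpha t = \inf_{0 \le r \le t}(W_r - \alpha r)$. Hence $\tau_T(\mathcal{H}_\alpha) = \mathbf{cl}\{t > 0 : W_t \wedge W_{t-} - \alpha t = \inf_{0 \le r \le t}(W_r - \alpha r)\} =: \mathcal{K}(W)$, a fixed measurable functional of $W$. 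Because $r_s(\mathcal{H}_\alpha)$ for $s \le 0$ is determined by $\mathcal{H}_\alpha \cap (-\infty,T]$ (as $d_s \le d_0 = T$), the $\sigma$-field $\mathcal{G}_{0+}^\lra$ pulls back inside $\mathcal{F}_T$; since $W$ is independent of $\mathcal{F}_T$ with a law free of the past, $\mathcal{K}(W)$ is independent of $\mathcal{G}_{0+}^\lra$ and has a fixed distribution $\Q^\ra$, giving $\Q^\lra[F(\tau_{d_0}) \mid \mathcal{G}_{0+}^\lra] = \Q^\ra[F]$, which is condition (ii).

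I expect the main obstacle to be exactly the identity $Y_T = I_T$. If the infimum at $T$ were attained only by the left limit $Y_{T-}$ (an upward-jump ``shadow'' point of $\mathcal{H}_\alpha \setminus \mathcal{R}_\alpha$), then the post-$T$ membership condition would involve the $\mathcal{F}_T$-measurable gap $Y_T - I_T$ and the clean decoupling $\tau_T(\mathcal{H}_\alpha) = \mathcal{K}(W)$ would break down; excluding this is precisely what Proposition~\ref{P:ladder_times} provides. A lesser, purely technical point is the measurability bookkeeping that makes $T$ a stopping time and places $\mathcal{G}_{0+}^\lra$ inside $\mathcal{F}_T$.
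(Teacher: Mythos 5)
Your overall architecture matches the paper's: prove stationarity by translating the process, reduce to $t=0$ via Remark~\ref{R:zero_enough}, get condition (i) from unboundedness, and reduce condition (ii) to the key fact that the infimum $\inf_{s\le T}(X_s-\alpha s)$ is attained by the right-hand value at $T=d_0$ (equivalently $X_T\le X_{T-}$, i.e.\ $T\in\mathcal{R}_\alpha$), so that $\tau_T(\mathcal{H}_\alpha)$ is a functional of the post-$T$ increments alone. You have correctly isolated the crux. The problem is how you justify it: you invoke Proposition~\ref{P:ladder_times}(iii) and (iv), but in this paper the proofs of those two parts are themselves derived \emph{from} the proof of Theorem~\ref{reg:thm} --- part (iii) begins by citing ``$X_D\le X_{D-}$ a.s.\ and so $D\in\mathcal{R}_\alpha$ a.s.'', which is exactly the statement you are trying to establish, and part (iv) then rests on part (iii). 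So within the paper's logical structure your argument is circular, and you do not supply an independent proof of $\mathbf{cl}(\mathcal{R}_\alpha)=\mathcal{H}_\alpha$ that would break the cycle. The paper instead proves $X_D\le X_{D-}$ directly: it introduces the stopping times $D^{(n)}:=\inf\{t>0: X_t\wedge X_{t-}-\alpha t\le\inf_{u\le t}(X_u-\alpha u)+\tfrac1n\}$, shows $\sup_n D^{(n)}=D$ and that $D^{(n)}<D$ strictly on the event $\{X_D>X_{D-}\}$, and then uses quasi-left continuity of the L\'evy process to force $X_{D-}=\lim_n X_{D^{(n)}}=X_D$ there, a contradiction. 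You need this (or some other self-contained) argument.

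A secondary gap: condition (ii) requires independence from $\mathcal{G}_{0+}^\lra=\bigcap_{\eps>0}\sigma\{r_s:s\le\eps\}$, not merely from $\sigma\{r_s:s\le 0\}$. Your one-line claim that this $\sigma$-field ``pulls back inside $\mathcal{F}_T$'' because $d_s\le d_0$ for $s\le 0$ does not address the contribution of $r_s$ for $s\in(0,\eps]$. The paper handles this by showing $\bigcap_{\eps>0}\sigma\{R_s:s\le\eps\}\subseteq\bigcap_n\mathcal{F}_{D_{1/n}}=\mathcal{F}_{D_+}=\mathcal{F}_D$, which requires proving both the right-continuity of the filtration along the decreasing stopping times $D_{1/n}$ and the almost sure identity $\lim_n D_{1/n}=D$. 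You flag this as ``bookkeeping,'' but it is a genuine step that your sketch does not carry out.
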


\begin{proof}
We first show that $\mathcal{H}_{\alpha}$ is stationary. Let $a \in \reals$. Define the process $(X_{t}^{(a)})_{t \in \reals} := (X_{t-a}-X_{-a})_{t \in \reals}$.  This process is a L\`evy process that has the same distribution as $(X_t)_{t \in \reals}$, and we have
\begin{equation*}
\begin{split}
t \in \mathcal{H}^{X}_{\alpha}+a & \Leftrightarrow t-a \in \mathcal{H}^{X}_{\alpha} \\
& \Leftrightarrow X_{t-a} \wedge X_{(t-a)-}-\alpha(t-a) = \underset{ u \leq t-a}{\text{inf }} (X_u-\alpha u)\\
& \Leftrightarrow X_{t-a}\wedge X_{(t-a)-}-X_{-a}-\alpha(t-a)=\underset{u \leq t}{\text{inf }} (X_{u-a}-X_{-a} -\alpha(u-a) )\\
& \Leftrightarrow X^{(a)}_{t}\wedge X^{(a)}_{t-}-\alpha t = \underset{ u \leq t}{\text{inf }} (X^{(a)}_{u}-\alpha u)\\
& \Leftrightarrow t \in \mathcal{H}^{X^{(a)}}_{\alpha}.
\end{split}
\end{equation*}
Hence, $\mathcal{H}^{X}_{\alpha}+a=\mathcal{H}^{X^{(a)}}_{\alpha}  \ed \mathcal{H}^{X}_{\alpha}$ for all $a \in \reals$, and the stationarity is proved.

Now, because of Remark ~\ref{R:zero_enough}, to prove the regeneration property it suffices to check that conditions (i) and (ii) of Definition~\ref{def:regenset} hold for $t=0$. 
As pointed out in the Introduction, the random set $\mathcal{H}_{\alpha}$ is almost surely unbounded from above, hence condition (i) is verified.

For $t \in \reals$ introduce the random times
\[
D_t:=\inf\{ s > t : s \in \mathcal{H}_{\alpha} \}
= 
\inf\left \{ s >t: X_{s}\wedge X_{s-}-\alpha s = \inf_{u \le s} ( X_u - \alpha u )\right\}
\]
and put
\[
R_t := D_t - t.
\]
It is clear from the d\'ebut theorem that $D := D_0$ is a stopping time with respect to the filtration $(\mathcal{F}_t)_{t \in \reals}$. 
To prove condition (ii), it suffices to show that the random set
\[
\tau_{D}(\mathcal{H}_{\alpha})=\mathbf{cl}\left\{ t >0 : X_{t+D}\wedge X_{(t+D)-}-\alpha (t+D) = \inf_{u \leq t+D} (X_u-\alpha u) \right\}
\]
is independent of of the $\sigma$-field $\bigcap_{\epsilon>0} \sigma \{ R_s : s\leq \epsilon \}$.

We shall prove first that 
\begin{equation}
\label{E:containment1}
\bigcap_{\epsilon>0} \sigma \{ R_s : s\leq \epsilon \} \subseteq \mathcal{F}_D.
\end{equation}

It is clear that
\begin{equation}
\label{E:containment2}
\bigcap_{\epsilon>0} \sigma \{ R_s : s\leq \epsilon \} \subseteq \bigcap_{n \in \nats}  \mathcal{F}_{D_{\frac{1}{n}}}.
\end{equation}

Moreover, for a sequence of nonincreasing stopping times $T_n$ converging almost surely to a stopping time $T$, we have
\begin{equation}
\label{E:filtration_right_continuity}
\bigcap_{n \in \nats}  \mathcal{F}_{T_n} = \mathcal{F}_T.
\end{equation}
To see this, take $\epsilon > 0$ and consider a random variable
$Z$ that is $\bigcap_{n \in \nats}  \mathcal{F}_{T_n}$--measurable.   
We have almost surely the convergence $Z \ind_{\{ T_n \leq T+\epsilon\}} \rightarrow Z$.
Note that
$Z \ind_{\{ T_n \leq T+\epsilon\}}$ is $\mathcal{F}_{T+\epsilon}$--measurable.
Thus $Z$ is $\mathcal{F}_{T+\epsilon}$--measurable.
  It follows from the strong Markov property and the Blumenthal zero--one law that 
\[
\bigcap_{\epsilon > 0} \mathcal{F}_{T+\epsilon} = \mathcal{F}_T
\]
and so $Z$ is $\mathcal{F}_T$--measurable.

In order to establish \eqref{E:containment1}, it follows from \eqref{E:containment2} and \eqref{E:filtration_right_continuity} that it is enough to conclude that 
\begin{equation}
\label{E:D_right_continuity}
D_+:=\lim_{n \rightarrow \infty} D_{\frac{1}{n}}=D, \quad \text{a.s.}
\end{equation}
To see this, suppose to the contrary that $\mathbb{P}\{D < D+\} > 0$. 
We claim that $D>0$ on the event $\{D < D_+\}$.  This is so, because on the event $\{0 = D < D_+\}$ the point $0$ is a right accumulation point of $\mathcal{H}_\alpha$ and then  $D_{\frac{1}{n}}$ must converge to zero, which is not possible.  
On the event $\{0 < D\}$ we have that $D_+ \le D_{\frac{1}{N}} \le D$ as soon as
$N$ is large enough so that $\frac{1}{N} < D$.  Thus, $\mathbb{P}\{D < D+\} = 0$ and \eqref{E:D_right_continuity} holds, implying
that \eqref{E:containment1} also holds.

With \eqref{E:containment1} in hand, it is enough to prove that the set $\tau_D(\mathcal{H}_{\alpha})$ is independent of
 $\mathcal{F}_D$.
Observe that 
\[
\begin{split}
\tau_D(\mathcal{H}_{\alpha}) = \mathbf{cl}&\Bigl\{t > 0 : X_{t+D}\wedge X_{(t+D)-}-X_D-\alpha t \\
& \quad =(X_D\wedge X_{D-}-X_D) \wedge \inf_{0 \le u \le t} (X_{u+D}-X_D-\alpha u) \Bigr\}. \\
\end{split}
\]
Because $D$ is a stopping time, the process $(X_{t+D}-X_D)_{t \geq 0}$ is independent of $\mathcal{F}_D$.  It therefore
suffices to prove that $X_D \leq X_{D-}$ a.s.

Suppose that the event $\{X_D>X_{D-}\}$ has positive probability.  Because $X_0 = X_{0-}$ almost surely,
 $D>0$ on this event.

Introduce the nondecreasing sequence $(D^{(n)})_{n \in \nats}$ of stopping times 
\[
D^{(n)} :=\inf \left \{ t >0: X_t \wedge X_{t-} -\alpha t \leq \inf_{u \le t} (X_u-\alpha u) +\frac{1}{n} \right \}
\]
and put $D^{(\infty)}:=\sup_{n \in \nats} D^{(n)}$.   By Lemma~\ref{deterministic},
\[
D =\inf \left \{ t >0: X_t \wedge X_{t-} -\alpha t \leq \inf_{u \le t} (X_u-\alpha u) \right \},
\]
and so $D^{(\infty)} \le D$.
Because $X$ has c\`adl\`ag paths, for all $n \in \nats$ we have that 
$X_{D^{(n)}} \wedge X_{D^{(n)}-}-\alpha D^{(n)} \leq  \inf_{u \le D^{(n)}} (X_u-\alpha u) +\frac{1}{n}$. 
Sending $n$ to infinity and again using the fact that $X$ has c\`adl\`ag paths, 
we get that $X_{D^{(\infty)}} \wedge X_{D^{(\infty)}-} -\alpha D^{(\infty)} \leq \inf_{u \le D^{(\infty)}} (X_u-\alpha u)$,
and so $D^{(\infty)} \in \mathcal{H}_\alpha$.
By definition of $D$, we conclude that $D^{(\infty)}=D$.

 Suppose we are on the event $\{X_D>X_{D-}\}$ (so that $D>0$) and there is $n \in \nats$ such that $D^{(n)}=D$. 
For all $0<s<D$ we have that $X_s \wedge X_{s-} -\alpha s > \inf_{u \le s} (X_u-\alpha u)+\frac{1}{n}$ so by sending $s\uparrow D$ we get that : $X_{D-}-\alpha D \geq \inf_{u \le D} (X_u-\alpha u)+\frac{1}{n}$, which contradicts  $X_{D-}<X_D$.
Thus $D^{(n)} < D$ on the event $\{X_D>X_{D-}\}$. 
By the quasi-left continuity of $X$ we thus have on the event $\{X_D>X_{D-}\}$ that
\[
X_{D-} = \lim_{n \to \infty} X_{D^{(n)}}= X_D, \quad \text{a.s.},
\]
and so $\P\{X_D>X_{D-}\} = 0$ as claimed.
\end{proof}

\section{Relationship with the set of ladder times}
\lb{S:ladder_times}

\noindent
{\bf Proof of Proposition~\ref{P:ladder_times}}
(i) If $t \in \mathcal{R}_\alpha$, then $X_t-\alpha t =\inf_{u \le t} (X_u-\alpha u)$ and so
$X_t \wedge X_{t-} -\alpha t \le \inf_{u \le t} (X_u-\alpha u)$.  It follows from Lemma~\ref{deterministic} that
$t \in \mathcal{H}_\alpha$.

\noindent
(ii) Because the process $(X_t)_{t \in \reals}$ is right-continuous,  it is clear that $\mathcal{R}_{\alpha}$ is closed from the right;
that is, for every sequence $t_n \downarrow t$ such that $t_n \in \mathcal{R}_{\alpha}$ we have $t  \in \mathcal{R}_{\alpha}$.

\noindent
(iii) As the set $\mathcal{H}_{\alpha}$ is closed and $\mathcal{R}_{\alpha} \subseteq \mathcal{H}_{\alpha}$ we 
certainly have $\mathbf{cl}(\mathcal{R}_{\alpha}) \subseteq \mathcal{H}_{\alpha}$.
We showed in the proof of Theorem~\ref{reg:thm} that $X_D \leq X_{D-}$ a.s. and so $D \in \mathcal{R}_{\alpha}$ a.s. 
By stationarity, $D_t \in \mathcal{R}_{\alpha}$ a.s. for any $t \in \reals$.
Therefore, almost surely for all $r \in \Q$  we have $D_r \in \mathcal{R}_{\alpha}$.  
 Suppose that $t \in \mathcal{H}_{\alpha}$. 
Take a sequence of rationals $\{r_n\}_{n \in \nats}$ such that $r_n \uparrow t$.
Then, for all $n \in \nats$, we have
 $ r_n \leq D_{r_n} \leq t$
and
$D_{r_n} \in \mathcal{R}_{\alpha}$.
It follows that $t \in \mathbf{cl}(\mathcal{R}$ and so $\mathbf{cl}\mathcal{R}_{\alpha})=\mathcal{H}_{\alpha}$.
 
\noindent
(iv)  Take $t \in \mathcal{H}_{\alpha}$ that is not isolated on the right so that there exists a sequence $\{t_n\}_{n \in \nats}$
of point in  $\mathcal{H}_{\alpha}$  such that $t_n \downarrow t$ and $t_n>t$. Consider a sequence $(r_n)_{n \in \nats}$ 
of rational numbers such that for every $n \in \nats$ we have $t \leq r_n \leq t_n$. 
We then have $t \leq r_n \leq D_{r_n} \leq t_n$.  Thus,
 $D_{r_n} \downarrow t$ and, as we have already observed, $D_{r_n} \in \mathcal{R}_{\alpha}$ for all $n \in \nats$.
Since $\mathcal{R}_{\alpha}$ is closed from the right, we must have $t \in \mathcal{R}_{\alpha}$. 
Finally, as the set of points isolated on the right is countable, the set $\mathcal{H}_{\alpha} \setminus \mathcal{R}_{\alpha}$ consists of at most countably many points.
\qed

\section{Regenerative embedding generalities}
\label{S:reg_emb_gen}

We recall the notion of a regenerative embedding of a sequence of regenerative sets from \cite{bertoin99}. 
We modify it slightly to encompass the whole real line instead of the set of nonnegative real numbers. 
For ease of notation we restrict our definition to the case of two sets.  The generalization to a greater number of sets is straightforward.

\begin{defn}
\lb{reg:emb}
Recall that $\Omega^{\leftrightarrow}$  
is the set of closed subsets of $\reals$
and that $\Omega^{\rightarrow}$
is the set of closed subsets of $\reals_+$).
Set
\[
\bar \Omega :
=\{ \omega=(\omega^{(1)},\omega^{(2)}) \in \Omega^{\leftrightarrow} \times \Omega^{\leftrightarrow} : \omega^{(1)} \subseteq \omega^{(2)} \}.
\]
and
\[
\bar{\Omega} ^{\rightarrow} :=\{ \omega=(\omega^{(1)}, \omega^{(2)}) \in \Omega^{\rightarrow} \times \Omega^{\rightarrow} : \omega^{(1)} \subseteq \omega^{(2)} \}.
\]
Write $M^{(1)}(\omega)=\omega^{(1)}$ and $M^{(2)}(\omega)=\omega^{(2)}$ for the canonical projections on $\bar \Omega$, $M=(M^{(1)},M^{(2)})$.
For $t \in \reals$ put
\[
d_t^{(1)}(\omega)=d_t(\omega^{(1)})
\]
and, with a slight abuse of notation,
\[
\tau_t(\omega)=(\tau_t(\omega^{(1)}),\tau_t(\omega^{(2)})).
\]
Denote by $\mathcal{G}_t$ the sigma-field generated by $d_t^{(1)}$, $M^{(1)}\cap(-\infty,d_t^{(1)}]$,
 and $M^{(2)} \cap (-\infty,d_t^{(1)}]$. 
It is easy to check that $(\mathcal{G}_t)_{t \in \reals}$ is a filtration. 
A probability measure $\mathcal{P}$ is called a \textit{regenerative embedding law} with regeneration law $\mathcal{P}^{\rightarrow}$ if for each $t \in \reals$ and each bounded measurable function $f : \bar{\Omega}^\rightarrow \rightarrow \mathbb{R}$
\begin{equation}
\lb{embedd}
\mathcal{P}[f(M\circ \tau_{d_t^{(1)}}) \, \vert \, \mathcal{G}_t]=\mathcal{P}^{\rightarrow}[f(M)] \text{ on } \{d_t^{(1)} < \infty\}.
\end{equation}
We denote such an embedding by the notation $M^{(1)} \prec M^{(2)}$.
\end{defn}

\begin{remark}
\begin{itemize}
\item [(i)]
If under the probability measure $\mathcal{P}$, the canonical pair $(M_1, M_2)$ of random sets is \textit{jointly stationary}, 
in the sense that for all $t \in \reals$ the pair  $(M_1+t,M_2+t)$ has the same distribution as $(M_1, M_2)$, 
then to check that that there is a regenerative embedding it suffices to verify \eqref{embedd} for $t=0$.
\item[(ii)] A similar definition holds for subsets of $\reals_+$ that
contain zero almost surely, which is the version present in \cite{bertoin99}. 
\end{itemize}
\end{remark}

The following theorem follows straightforwardly from the results in \cite{bertoin99}.

\begin{thm}
\lb{ind:emb}
Let:
\[
\mathcal{S}^{(1)} \prec \mathcal{S}^{(2)} \prec \dots \mathcal{S}^{(n)}
\]
be a jointly stationary sequence of subsets of $\reals$ that are regeneratively embedded in the sense of the Definition~\ref{reg:emb}. 
Let $\Phi_{i}$ be the Laplace exponent of the subordinator associated with each $\mathcal{S}^{(i)}$. 
Introduce the measures $\mu_1,\ldots,\mu_{n}$ on $\reals_+$, defined by their Laplace transforms
\[
 \int_{\reals_+} e^{-\lambda x} \, \mu_i (dx) := \frac{\Phi_{i}(\lambda)}{\Phi_{i+1}(\lambda)}, \quad  \lambda>0, \; 1 \le i \le n,
\]
where we adopt the convention $\Phi_{n+1}(\lambda) := \lambda$, $\lambda > 0$.
Put
\[
c_i :=\frac{1}{\mu_i(\reals_+)}=\lim_{\lambda \downarrow 0} \frac{\Phi_{i+1}(\lambda)}{\Phi_{i}(\lambda)}, \quad 1 \le i \le n.
\]
Define the age processes $A_t^{i}$ for each set $\mathcal{S}^{(i)}$ by
\[
A_t^{i} :=\inf \{ s \geq 0: t-s \in \mathcal{S}^{(i)} \}.
\]
Then, for any $t \in \reals$,
\[
(A_t^{1}-A_t^{2}, \ldots, A_t^{n-1}-A_t^{n}, A_t^{n})  \ed c_1\mu_1 \otimes c_2\mu_2 \otimes \cdots \otimes c_n\mu_n.
\]
\end{thm}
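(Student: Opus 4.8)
The plan is to reduce everything to the single epoch $t=0$, write down the one-set marginals explicitly, and then read off the full product law from the mutual independence of the successive age gaps, which is where the regenerative embedding really enters. First I would invoke joint stationarity (a hypothesis of the theorem; see the Remark following Definition~\ref{reg:emb}) to take $t=0$. Writing $g^{(i)}:=-A_0^{i}=\sup\{s\le 0:s\in\mathcal S^{(i)}\}$ for the last point of $\mathcal S^{(i)}$ weakly before the origin, the inclusions $\mathcal S^{(1)}\subseteq\cdots\subseteq\mathcal S^{(n)}$ force $g^{(1)}\le\cdots\le g^{(n)}\le 0$, so each coordinate
\[
W_i:=A_0^{i}-A_0^{i+1}=g^{(i+1)}-g^{(i)}\ge 0,\qquad 1\le i\le n-1,
\]
together with $A_0^{n}=-g^{(n)}\ge 0$, is a nonnegative gap, and the target is the joint law of $(W_1,\dots,W_{n-1},A_0^{n})$.

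Next I would record the one-set marginals. Each $\mathcal S^{(i)}$ is on its own a stationary regenerative set whose inverse local time is a subordinator with Laplace exponent $\Phi_i$ and (finite) mean $m_i=\Phi_i'(0)$; by the classical length-biasing description of the gap of a stationary regenerative set straddling the origin (the straddling gap is size-biased and the origin sits uniformly within it), the backward recurrence time satisfies
\[
\E\!\left[e^{-\lambda A_0^{i}}\right]=\frac{\Phi_i(\lambda)}{m_i\,\lambda},\qquad\lambda>0 .
\]
In particular $\E[e^{-\lambda A_0^{n}}]=\Phi_n(\lambda)/(m_n\lambda)$, which is exactly $c_n\int e^{-\lambda x}\,\mu_n(dx)$ under the convention $\Phi_{n+1}(\lambda)=\lambda$, since $c_n=1/\mu_n(\reals_+)=\lim_{\lambda\downarrow0}\lambda/\Phi_n(\lambda)=1/m_n$.

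The heart of the matter is to prove that $(W_1,\dots,W_{n-1},A_0^{n})$ are \emph{mutually independent}, and here the regenerative embedding is used repeatedly. Passing backwards from the origin through $g^{(n)},g^{(n-1)},\dots,g^{(1)}$, the claim is that the gap $W_i=g^{(i+1)}-g^{(i)}$ separating the last point of the coarser set $\mathcal S^{(i+1)}$ from the last point of the finer set $\mathcal S^{(i)}$ is produced by a fresh embedding of $\mathcal S^{(i)}$ in $\mathcal S^{(i+1)}$ started at $g^{(i+1)}$, and is therefore independent of the configuration of $(\mathcal S^{(i+1)},\dots,\mathcal S^{(n)})$ to the right of $g^{(i+1)}$, hence of $(W_{i+1},\dots,W_{n-1},A_0^{n})$. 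Concretely I would reflect the jointly stationary configuration through the origin, turning these backward gaps into forward overshoots, apply \eqref{embedd} (the regeneration of the whole embedded family at the relevant passage time) as developed in \cite{bertoin99}, and iterate over $i=n-1,\dots,1$. This step is the main obstacle: the successive passage points $g^{(i+1)}$ are points of $\mathcal S^{(i+1)}$ but need not be regeneration epochs of the \emph{finer} embedding $\mathcal S^{(i)}\prec\mathcal S^{(i+1)}$, so one must control the residual local-time age of the embedded set at such a point and check that it does not couple consecutive gaps. This bookkeeping is precisely what Bertoin's analysis of embedded regenerative sets supplies, and the work is to transport it from the half-line to the two-sided stationary setting.

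Finally I would assemble the pieces. Once independence is in hand, the identity $A_0^{i}=W_i+A_0^{i+1}$ with $W_i$ independent of $A_0^{i+1}$ gives, by the one-set marginals above,
\[
\E\!\left[e^{-\lambda W_i}\right]=\frac{\E[e^{-\lambda A_0^{i}}]}{\E[e^{-\lambda A_0^{i+1}}]}=\frac{m_{i+1}}{m_i}\,\frac{\Phi_i(\lambda)}{\Phi_{i+1}(\lambda)},
\]
which is exactly $c_i\int e^{-\lambda x}\,\mu_i(dx)$, since $\mu_i(\reals_+)=\lim_{\lambda\downarrow0}\Phi_i(\lambda)/\Phi_{i+1}(\lambda)=m_i/m_{i+1}$ and $c_i=m_{i+1}/m_i$. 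That $\Phi_i/\Phi_{i+1}$ is genuinely the Laplace transform of a finite measure $\mu_i$ follows from the subordination representation $\Phi_i=\psi_i\circ\Phi_{i+1}$ attached to the embedding, which makes $\Phi_i/\Phi_{i+1}=\psi_i(\Phi_{i+1})/\Phi_{i+1}$ completely monotone. Combining the identified marginals with the mutual independence yields the asserted product law $c_1\mu_1\otimes\cdots\otimes c_n\mu_n$, and undoing the reduction to $t=0$ by stationarity completes the argument; the whole scheme is the $n$-fold iteration of the two-set case implicit in \cite{bertoin99}.
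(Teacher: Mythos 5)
The paper offers no argument for this theorem beyond the remark that it ``follows straightforwardly from the results in \cite{bertoin99}'', so the relevant comparison is with Bertoin's renewal theorem for embedded regenerative sets, which is what the authors are invoking. Your reduction to $t=0$ by joint stationarity, the one-set marginal $\E[e^{-\lambda A_0^{i}}]=\Phi_i(\lambda)/(\Phi_i'(0)\lambda)$ from \cite{fitztaksar}, the complete monotonicity of $\Phi_i/\Phi_{i+1}$ via the subordination $\Phi_i=\psi_i\circ\Phi_{i+1}$, and the final assembly of the product law from the claimed independence are all correct and are exactly the ingredients used in \cite{bertoin99}. Where you diverge is in how the product structure is obtained: Bertoin proves a \emph{forward} renewal theorem on $\reals_+$ --- for the embedded family started at $0$, the age vector $(A_t^{1}-A_t^{2},\dots,A_t^{n})$ converges in law to $c_1\mu_1\otimes\cdots\otimes c_n\mu_n$ as $t\to\infty$ --- and the two-sided stationary statement follows because the stationary age law does not depend on $t$ and must coincide with that renewal limit (regenerate at $d_0$ and let $t\to\infty$). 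You instead work backwards from the origin in the stationary regime through the successive last points $g^{(n)}\ge\cdots\ge g^{(1)}$.

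That backward route is where your argument has a genuine gap, which you yourself flag but do not close: $g^{(i+1)}$ is a last-exit time, not a stopping time, and \eqref{embedd} regenerates the family only at the first-passage times $d_t^{(1)}$ of the \emph{finest} set, so nothing in Definition~\ref{reg:emb} directly yields independence of the configuration to the left of $g^{(i+1)}$ from the configuration to its right. Your proposed remedy --- reflect the stationary configuration through the origin so that backward gaps become forward overshoots --- presupposes that the time reversal of a jointly stationary regeneratively embedded family is again regeneratively embedded; this is true but is asserted rather than proved, and it is not a formal consequence of single-set reversibility since the filtration $\mathcal{G}_t$ of the embedding is generated by different data than that of either set alone. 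As written, the mutual independence of $(W_1,\dots,W_{n-1},A_0^{n})$ --- the entire content of the theorem beyond the marginals --- is therefore not established. The cleanest repair is to drop the backward iteration and instead quote Bertoin's renewal theorem for the one-sided embedded family together with the stationarity identification sketched above.
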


\section{A continuous family of embedded regenerative sets}
\lb{S:H_alpha_reg_emb}

\textbf{For this section, we suppose  that $X$ has a Brownian component or infinite L\'evy measure.  That is,
we suppose that $X$ is not a compound Poisson process with drift.  The latter case is trivial to study.}

\begin{lem}
\label{L:H_alpha_increasing}
For 
\[
\E[X_1]<\alpha_1<\alpha_2< \dots <\alpha_n.
\]
we have
\[
\mathcal{H}_{\alpha_1} \subseteq \mathcal{H}_{\alpha_2} \subseteq \dots \subseteq \mathcal{H}_{\alpha_n}.
\]
\end{lem}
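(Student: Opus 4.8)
The plan is to reduce the whole chain to the single comparison $\mathcal{H}_\alpha \subseteq \mathcal{H}_{\alpha'}$ for $\E[X_1] < \alpha < \alpha'$, since the inclusions compose transitively. This is a pathwise statement, so I would fix a realization of $X$ in the almost sure event on which $X_u - \alpha_i u \to -\infty$ as $u \to +\infty$ and $X_u - \alpha_i u \to +\infty$ as $u \to -\infty$ for every $i$ --- guaranteed by the strong law of large numbers because each $\alpha_i > \E[X_1]$ --- so that the relevant infima are finite and each $\mathcal{H}_{\alpha_i}$ is well defined.

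Fix such a path. Starting from the definition
\[
\mathcal{H}_\alpha = \left\{ t \in \reals : X_t \wedge X_{t-} - \alpha t = \inf_{u \le t}(X_u - \alpha u) \right\},
\]
I would first record the elementary equivalence used throughout: evaluating the infimum at $u = t$ and along $u \uparrow t$ shows that $\inf_{u \le t}(X_u - \alpha u) \le X_t \wedge X_{t-} - \alpha t$ always holds, so membership $t \in \mathcal{H}_\alpha$ is equivalent to the family of inequalities $X_t \wedge X_{t-} \le X_u + \alpha(t - u)$ for all $u \le t$ (this is also recorded in Lemma~\ref{deterministic}).

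The key step is the monotonicity of $\alpha \mapsto \alpha(t-u)$ on the relevant range: for $u \le t$ we have $t - u \ge 0$, hence $\alpha(t-u) \le \alpha'(t-u)$ and therefore $X_u + \alpha(t-u) \le X_u + \alpha'(t-u)$. Thus if $t \in \mathcal{H}_\alpha$, then for every $u \le t$,
\[
X_t \wedge X_{t-} \le X_u + \alpha(t-u) \le X_u + \alpha'(t-u),
\]
which is exactly the condition for $t \in \mathcal{H}_{\alpha'}$. Hence $\mathcal{H}_\alpha \subseteq \mathcal{H}_{\alpha'}$, and chaining along $\alpha_1 < \dots < \alpha_n$ gives the asserted inclusions.

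I do not anticipate a substantial obstacle: the argument rests only on the sign of $t - u$ for $u \le t$ together with the inequality characterization of $\mathcal{H}_\alpha$. The only care needed is bookkeeping --- replacing the defining equality by the family of inequalities via the always-valid bound $\inf_{u \le t}(X_u - \alpha u) \le X_t \wedge X_{t-} - \alpha t$, and noting that the reasoning is pathwise, so the inclusions hold simultaneously for all the $\alpha_i$ on the single full-measure event where the minorants are finite.
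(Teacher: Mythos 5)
Your proof is correct and is essentially the paper's argument: both reduce to the single comparison $\mathcal{H}_{\alpha'} \subseteq \mathcal{H}_{\alpha''}$, invoke the inequality characterization from Lemma~\ref{deterministic}(i), and exploit that $\alpha(t-u)$ is nondecreasing in $\alpha$ because $t-u \ge 0$ for $u \le t$. The preliminary remarks about the strong law and finiteness of the infima are harmless but not needed for the pathwise inclusion.
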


\begin{proof}
By part (i) of Lemma~\ref{deterministic},
\[
\mathcal{H}_{\alpha}:=\left\{ t \in \reals : X_{t}\wedge X_{t-}-\alpha t \le \inf_{u \le t} ( X_u - \alpha u ) \right\}.
\]
Hence, if $\E[X_1] < \alpha' < \alpha''$, $t \in \mathcal{H}_{\alpha'}$, and $u \le t$, then
\[
X_t\wedge X_{t-}-\alpha'' t 
\leq X_u-\alpha' u -(\alpha''-\alpha')t
\leq X_u-\alpha' u -(\alpha'' -\alpha')u
=X_u-\alpha'' u, 
\]
so that $t \in \mathcal{H}_{\alpha''}$.  Thus $\mathcal{H}_{\alpha'} \subseteq \mathcal{H}_{\alpha''}$ for $\E[X_1] < \alpha' < \alpha''$.
\end{proof}

\begin{prop}
\label{P:H_alpha_reg_emb}
For $\mathbb{E}[X_1]<\alpha_1<\alpha_2< \cdots <\alpha_n$ we have
\[
\mathcal{H}_{\alpha_1} \prec \mathcal{H}_{\alpha_2} \prec \dots \prec \mathcal{H}_{\alpha_n}.
\]
\end{prop}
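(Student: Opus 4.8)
The goal is to establish the regenerative embedding $\mathcal{H}_{\alpha_1} \prec \mathcal{H}_{\alpha_2} \prec \dots \prec \mathcal{H}_{\alpha_n}$ in the sense of Definition~\ref{reg:emb}. By the joint stationarity of the family $(\mathcal{H}_{\alpha_i})_i$—which follows from the same change-of-variables argument used for a single $\mathcal{H}_\alpha$ in the proof of Theorem~\ref{reg:thm}, applied simultaneously to all the levels—and by part (i) of the Remark following Definition~\ref{reg:emb}, it suffices to verify the embedding identity \eqref{embedd} only at $t=0$. The nesting $\mathcal{H}_{\alpha_1} \subseteq \dots \subseteq \mathcal{H}_{\alpha_n}$ is already supplied by Lemma~\ref{L:H_alpha_increasing}, so the pairs $(\mathcal{H}_{\alpha_i}, \mathcal{H}_{\alpha_{i+1}})$ genuinely land in $\bar\Omega$.

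The plan is to reduce everything to the strong Markov property at a single stopping time, exactly as in Theorem~\ref{reg:thm}. The distinguished set is the coarsest one, $\mathcal{H}_{\alpha_1}$, since the filtration $\mathcal{G}_t$ in Definition~\ref{reg:emb} is generated by $d_t^{(1)}$ together with the traces of all the sets up to $d_t^{(1)}$. I would set $D := D_0^{(1)} = \inf\{s > 0 : s \in \mathcal{H}_{\alpha_1}\}$, which is a stopping time by the déb">début theorem. The crucial structural fact, already proved inside Theorem~\ref{reg:thm}, is that $X_D \le X_{D-}$ almost surely; this guarantees that $D$ is a genuine renewal point and that the post-$D$ contact structure of each $\mathcal{H}_{\alpha_i}$ depends only on the post-$D$ increments. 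Concretely, for each $i$ one rewrites, as in Theorem~\ref{reg:thm},
\[
\tau_D(\mathcal{H}_{\alpha_i}) = \mathbf{cl}\Bigl\{ t > 0 : \bar X_t \wedge \bar X_{t-} - \alpha_i t = \inf_{0 \le u \le t}(\bar X_u - \alpha_i u) \Bigr\},
\]
where $\bar X_t := X_{t+D} - X_D$ and one uses $X_D \le X_{D-}$ to discard the boundary term at $u=0$. The key point is that the \emph{whole shifted process} $(\bar X_t)_{t \ge 0}$ is independent of $\mathcal{F}_D$ and is itself a Lévy process with the same law as $X$; hence the entire tuple $(\tau_D(\mathcal{H}_{\alpha_1}), \dots, \tau_D(\mathcal{H}_{\alpha_n}))$ is a fixed measurable functional of $(\bar X_t)_{t \ge 0}$ and is therefore independent of $\mathcal{F}_D$, with a law that does not depend on the pre-$D$ configuration.

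The two steps requiring care are the following. First, one must check that $\mathcal{G}_0 \subseteq \mathcal{F}_D$, i.e. that the pre-$D$ information encoded in the traces $M^{(i)} \cap (-\infty, D]$ is $\mathcal{F}_D$-measurable; this is the analogue of the containment \eqref{E:containment1} and follows from the right-continuity identity $D_+ = D$ together with \eqref{E:filtration_right_continuity}, now invoking that each $\mathcal{H}_{\alpha_i}$ is reconstructible from its distance functions, which are adapted to $(\mathcal{F}_t)$. I expect the \textbf{main obstacle} to be precisely this measurability reduction: one must confirm that the débof of the \emph{coarsest} set $\mathcal{H}_{\alpha_1}$ simultaneously serves as a regeneration epoch for all the \emph{finer} sets, i.e. that at $t=0$ the stopping time $D_0^{(1)}$ is the correct one appearing in \eqref{embedd} and that conditioning on $\mathcal{G}_0$ is no stronger than conditioning on $\mathcal{F}_D$. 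Once that containment is in place, the strong Markov property gives that $M \circ \tau_D$ is independent of $\mathcal{F}_D$ (hence of $\mathcal{G}_0$) with a law $\mathcal{P}^\rightarrow$ equal to the law of the same functional applied to an independent copy of $X$ started afresh, which is exactly \eqref{embedd}. This completes the verification.
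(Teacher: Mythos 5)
Your proposal follows essentially the same route as the paper's proof: reduce to $t=0$ by joint stationarity, take $D=D_0^{(1)}$ as the regeneration epoch, invoke $X_D\le X_{D-}$ from the proof of Theorem~\ref{reg:thm} to kill the boundary term and rewrite each $\tau_D(\mathcal{H}_{\alpha_i})$ as a functional of the post-$D$ increments, note that the pre-$D$ traces are $\mathcal{F}_D$-measurable, and conclude by the strong Markov property. The argument is correct; the only (harmless) difference is that you re-import the germ-field/right-continuity machinery for the containment $\mathcal{G}_0\subseteq\mathcal{F}_D$, whereas here $\mathcal{G}_0$ is generated directly by $D_0^{(1)}$ and the stopped traces, so that step is immediate.
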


\begin{proof}
For ease of notation, we restrict our proof to the case $n=2$. 

By Lemma~\ref{L:H_alpha_increasing} we have $\mathcal{H}_{\alpha_1} \subseteq \mathcal{H}_{\alpha_2}$ when 
$\mathbb{E}[X_1]<\alpha_1<\alpha_2$.

By stationarity, we only need to verify \eqref{embedd} for $t=0$. 
It is clear that 
\[
D_0^{(1)}:=\inf \left\{ s > 0 : X_{s}\wedge X_{s-}-\alpha_1 s= \inf_{u \le s} (X_u-\alpha_1 u)\right\}
\]
 is an $(\mathcal{F}_t)_{t \in \reals}$-stopping time. From the proof of Theorem ~\ref{reg:thm}, we have that almost surely
\[
X_{D_0^{(1)}} \leq X_{D_0^{(1)}-}.
\]
Now $D_0^{(1)} \in \mathcal{H}_{\alpha_2}$ and hence
\[
\begin{split}
 \mathcal{H}_{\alpha_i}\circ \tau_{D_0^{(1)}} 
= 
\mathbf{cl}\Bigl \{ s > 0 : & X_{s+D_0^{(1)}}\wedge X_{s+D_0^{(1)}-}-X_{D_0^{(1)}}-\alpha_i s  \\
& \quad =\inf_{u \le s} \left(X_{u+D_0^{(1)}}-X_{D_0^{(1)}}-\alpha_i u \right)\Bigr\} \\
\end{split}
\]
for $i=1,2$. Now each of
$D_0^{(1)}$, $\mathcal{H}_{\alpha_1} \cap (-\infty, D_0^{(1)}]$,
 and $\mathcal{H}_{\alpha_2}  \cap (-\infty, D_0^{(1)}]$
is $\mathcal{F}_{D_0^{(1)}}$--measurable, so it 
remains to note that
$(X_{s+D_0^{(1)}}-X_{D_0^{(1)}})_{ s \ge 0}$
is independent of $\mathcal{F}_{D_0^{(1)}}$.
\end{proof}

\noindent
{\bf Proof of Theorem~\ref{ind:onesided}}

It is clear that $G$ is nondecreasing. 

As for the right-continuity, consider $\beta > \E[X_1]$ and a sequence $\{\beta_n\}_{n \in \nats}$ with 
$\beta_n \downarrow \beta$ and $\beta_n > \beta$.
Suppose that $G_{\beta+}:=\lim_{n \rightarrow \infty} G_{\beta_n}>G_{\beta}$. 
For any $u \le G_{\beta+} \le G_{\beta_n}$ we have
\[
X_{G_{\beta_n}} \wedge X_{G_{\beta_n}-} - \beta_n G_{\beta_n} \le X_u -\beta_n u.
\]
Taking the limit as $n$ goes to infinity gives
\[
X_{G_{\beta+}} - \beta G_{\beta+} \le X_u-\beta u
\]
and hence
\[
X_{G_{\beta+}}\wedge X_{G_{\beta+}-} - \beta G_{\beta+} \le X_u-\beta u.
\]
It follows from Lemma~\ref{deterministic} that $G_\beta < G_{\beta+} \in \mathcal{H}_\beta$, but this
contradicts the definition of $G_{\beta}$. 

Corollary VI.10 in \cite{bertoin} gives that the Laplace exponent of the subordinator associated with the ladder time set of the process $(\alpha t -X_t)_{t \ge 0}$ (the subordinator is the right-continuous inverse of the local time associated with this set) is 
\[
\Phi_{\alpha}(\lambda)=\exp \left( \int_{0}^{\infty} (e^{-t}-e^{-\lambda t})t^{-1}\mathbb{P} \{X_t \ge \alpha t\} dt \right).
\]
Fix $\mathbb{E}[X_1]<\alpha_1<\alpha_2 < \cdots < \alpha_n$.
Introduce the measures $\mu_1,\ldots,\mu_{n}$ on $\reals_+$, defined by their Laplace transforms
\[
 \int_{\reals_+} e^{-\lambda x} \, \mu_i (dx) := \frac{\Phi_{\alpha_i}(\lambda)}{\Phi_{\alpha_{i+1}}(\lambda)}, \quad  \lambda>0, \; 1 \le i \le n,
\]
where we adopt the convention $\Phi_{\alpha_{n+1}}(\lambda) := \lambda$, $\lambda > 0$.
Put
\[
c_i :=\frac{1}{\mu_i(\reals_+)}=\lim_{\lambda \downarrow 0} \frac{\Phi_{\alpha_{i+1}}(\lambda)}{\Phi_{\alpha_i}(\lambda)}, \quad 1 \le i \le n.
\]
Set $\nu_i = c_i \mu_i$, $1 \le i \le n$, so that 
\begin{equation}
\lb{Laplace1}
\begin{split}
 & \int_{\reals_+} e^{-\lambda x} \, \nu_i (dx) \\
& \quad = \exp \left (- \int_{0}^{\infty} (1-e^{- \lambda t})t^{-1}\mathbb{P}\{ \alpha_i  t \leq X_t \leq \alpha_{i+1}  t \} \, dt \right), \quad 1 \le i \le n-1, \\
\end{split}
\end{equation}
and
\begin{equation}
\lb{Laplace2}
 \int_{\reals_+} e^{-\lambda x} \, \nu_i (dx) = \exp \left (- \int_{0}^{\infty} (1-e^{- \lambda t})t^{-1}\mathbb{P}\{ X_t \ge \alpha_{n}  t \} \, dt \right).
\end{equation}
Then, by Theorem~\ref{ind:emb},
\[
(G_{\alpha_2}-G_{\alpha_1}, \ldots, G_{\alpha_n}-G_{\alpha_{n-1}}, -G_{\alpha_n})  
\ed \nu_1 \otimes \nu_2 \otimes \cdots \otimes \nu_n.
\]

It follows that the process $G$ has independent increments and that $\lim_{\alpha \to \infty} G_\alpha = 0$ almost surely.  
That  $(G_{\alpha})_{\alpha > \E[X_1]}$ is a pure jump process 
and the Poisson description of  $\{ (\alpha, G_{\alpha}-G_{\alpha-}): \; G_{\alpha}-G_{\alpha-}>0\}$ follows
from \eqref{Laplace1}, \eqref{Laplace2}, and standard L\'evy--Khinchin--It\^o theory.
\qed

%

\begin{remark}
Taking the concatenation of the lines with slopes $\alpha$ between $G_{\alpha}$ and $G_{\alpha-}$ for every jump time $\alpha$
constructs the graph of the convex minorant of the L\'evy process $(-X_{t-})_{t \ge 0}$. 
The conclusion of Theorem~\ref{ind:onesided} thus agrees with the study of the convex minorant of a L\'evy process 
carried out in  \cite{pitmanbravo}.
\end{remark}

\section{Another continuous family of embedded regenerative sets}
\lb{S:Z_alpha_reg_emb}

\begin{prop}
\lb{P:Z_alpha_reg_emb}
For $\vert \E[X_1] \vert < \alpha_1<\dots<\alpha_n$, we have that
\[
\mathcal{Z}_{\alpha_1} \prec \dots \prec \mathcal{Z}_{\alpha_n}.
\]
\end{prop}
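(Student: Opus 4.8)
The plan is to mirror the proof of Proposition~\ref{P:H_alpha_reg_emb}, the genuinely new feature being that the minorant $M$ is two-sided, so the clean stopping-time/strong-Markov argument available for $\mathcal{H}_\alpha$ must be handled with more care. As there, it is enough to treat $n=2$. The inclusion $\mathcal{Z}_{\alpha_1}\subseteq\mathcal{Z}_{\alpha_2}$ for $|\E[X_1]|<\alpha_1<\alpha_2$ is immediate from $M_t=\inf_s\{X_s+\alpha|t-s|\}$: if $X_t\wedge X_{t-}=\inf_s\{X_s+\alpha_1|t-s|\}$ then $X_t\wedge X_{t-}\le X_s+\alpha_1|t-s|\le X_s+\alpha_2|t-s|$ for all $s$, while $X_t\wedge X_{t-}\ge M_t^{(\alpha_2)}$ always holds because $M^{(\alpha_2)}$ is a minorant. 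The pair $(\mathcal{Z}_{\alpha_1},\mathcal{Z}_{\alpha_2})$ is jointly stationary by the shift computation used for $\mathcal{H}_\alpha$ in Theorem~\ref{reg:thm} applied simultaneously, so by the remark following Definition~\ref{reg:emb} it suffices to verify \eqref{embedd} for $t=0$. Write $D:=d_0^{(1)}=\inf\{s>0:s\in\mathcal{Z}_{\alpha_1}\}$. It is worth recording first the structural identity that exposes the two-sidedness: with $L_t:=\inf_{u\le t}\{X_u+\alpha(t-u)\}$ (the one-sided quantity from the definition of $\mathcal{H}_\alpha$) and $R_t:=\inf_{u>t}\{X_u+\alpha(u-t)\}$ we have $M_t=L_t\wedge R_t$, and since $L_t\le X_t\wedge X_{t-}$ one checks that $t\in\mathcal{Z}_\alpha$ iff $X_t\wedge X_{t-}=L_t$ and $L_t\le R_t$; in particular $\mathcal{Z}_\alpha\subseteq\mathcal{H}_\alpha$, and the condition $L_t\le R_t$ genuinely involves the future.

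The heart of the argument is a decoupling of the two-sided minorant at $D$. I would first establish that the contact value at $D$ is the running value rather than the left limit, i.e. $X_D\le X_{D-}$ almost surely, so that $M_D^{(\alpha_1)}=X_D\wedge X_{D-}=X_D$, and hence also $M_D^{(\alpha_2)}=X_D$ because $D\in\mathcal{Z}_{\alpha_1}\subseteq\mathcal{Z}_{\alpha_2}$. Granting this, the Lipschitz minorant inequality $X_u\ge M_u^{(\alpha_i)}\ge M_D^{(\alpha_i)}-\alpha_i|u-D|=X_D-\alpha_i|u-D|$ yields two identities. For $s\ge D$,
\[
\inf_{u\le D}\{X_u+\alpha_i|s-u|\}=M_D^{(\alpha_i)}+\alpha_i(s-D)=X_D+\alpha_i(s-D),
\]
which is already attained by $u=D$ in the right piece, so $M_s^{(\alpha_i)}=\inf_{u\ge D}\{X_u+\alpha_i|s-u|\}$. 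For $t\le D$, the inequality gives $\inf_{u\ge D}\{X_u+\alpha_i(u-t)\}\ge X_D+\alpha_i(D-t)$, which is dominated by the $u=D$ term of the left piece, so $M_t^{(\alpha_i)}=\inf_{u\le D}\{X_u+\alpha_i|t-u|\}$.

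The second identity shows that $\mathcal{Z}_{\alpha_i}\cap(-\infty,D]$ is $\sigma\{X_u:u\le D\}$-measurable, so the whole $\sigma$-field $\mathcal{G}_0$ (generated by $D$ and $\mathcal{Z}_{\alpha_1}\cap(-\infty,D]$, $\mathcal{Z}_{\alpha_2}\cap(-\infty,D]$) is carried by the pre-$D$ path. The first identity shows, after the substitution $u=D+v$ and centering by $X_D$, that
\[
\mathcal{Z}_{\alpha_i}\circ\tau_D=\mathbf{cl}\Bigl\{s>0:(X_{D+s}-X_D)\wedge(X_{(D+s)-}-X_D)=\inf_{v\ge 0}\{(X_{D+v}-X_D)+\alpha_i|s-v|\}\Bigr\}
\]
is one fixed measurable functional of the post-$D$ path $\hat X:=(X_{D+v}-X_D)_{v\ge 0}$, for both $i=1,2$ simultaneously, and the nesting $\mathcal{Z}_{\alpha_1}\circ\tau_D\subseteq\mathcal{Z}_{\alpha_2}\circ\tau_D$ is preserved. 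Thus verifying \eqref{embedd} reduces to showing that $\hat X$ is independent of the pre-$D$ configuration with a law not depending on it, which is precisely the regeneration of the single set $\mathcal{Z}_{\alpha_1}$ at its first point after $0$, established in \cite{zbMATH06288068}[Theorem 2.6].

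I expect the main obstacle to be exactly the input $X_D\le X_{D-}$ together with this post-$D$ independence. Unlike $\mathcal{H}_\alpha$, the set $\mathcal{Z}_\alpha$ is not adapted to $(\mathcal{F}_t)$ and $D$ is not an $(\mathcal{F}_t)$-stopping time (the condition $L_t\le R_t$ depends on the future), so one cannot invoke the d\'ebut theorem and the strong Markov property as in Proposition~\ref{P:H_alpha_reg_emb}; indeed, near $D$ the future is effectively conditioned to keep the minorant pinned. The decoupling identities are what let me transport the single-set regeneration from \cite{zbMATH06288068} to the joint, nested setting, since they simultaneously realize every $\mathcal{Z}_{\alpha_i}\circ\tau_D$ as a functional of the same post-$D$ path and every pre-$D$ trace as a functional of the same pre-$D$ path; once that reduction is in place, the embedding follows and the general $n$ case is identical.
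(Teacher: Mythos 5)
Your argument follows the same route as the paper's: reduce to $n=2$, note the inclusion and joint stationarity, and then split the two-sided minorant at $D$ so that the traces $\mathcal{Z}_{\alpha_i}\cap(-\infty,D]$ become functionals of the pre-$D$ path while the shifted sets $\mathcal{Z}_{\alpha_i}\circ\tau_D$ become functionals of the post-$D$ increments. Your two decoupling identities are exactly the content of \cite[Lemma 7.2]{1905.07038}, which the paper simply cites; your derivation of them from $X_D\le X_{D-}$ and the Lipschitz property of $M$ is correct (and you rightly flag that $X_D\le X_{D-}$ itself requires proof).

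The one weak link is the final independence step. Having correctly reduced matters to showing that $\hat X=(X_{D+v}-X_D)_{v\ge 0}$ is independent of $\sigma\{X_t:t\le D\}$ with a law not depending on it, you attribute this to ``the regeneration of the single set $\mathcal{Z}_{\alpha_1}$'' from \cite[Theorem 2.6]{zbMATH06288068}. As stated, that is a statement about the random set: it gives independence of $\tau_{d_0}(\mathcal{Z}_{\alpha_1})$ from the germ $\sigma$-field generated by $\mathcal{Z}_{\alpha_1}$ itself, which is strictly weaker than the path-level splitting you need. In particular, $\mathcal{G}_0$ here also contains $\mathcal{Z}_{\alpha_2}\cap(-\infty,D]$, which is not measurable with respect to the germ field of $\mathcal{Z}_{\alpha_1}$ alone, so the single-set regeneration cannot by itself deliver \eqref{embedd}. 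Since, as you observe, $D$ is not an $(\mathcal{F}_t)$-stopping time, the independence of the post-$D$ increments from the full pre-$D$ path is a genuine theorem rather than a formality; it is precisely \cite[Theorem 3.5]{1905.07038} (equivalently, the splitting argument inside the proof of \cite[Theorem 2.6]{zbMATH06288068}), and that is what the paper invokes. With that citation corrected, your proof closes and coincides in substance with the paper's.
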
 

\begin{proof}
We shall just prove the result for the case $n=2$. It is very clear that $\mathcal{Z}_{\alpha_1} \subseteq \mathcal{Z}_{\alpha_2}$, as any $\alpha_1$-Lipschitz function is also an $\alpha_2$-Lipschitz function. 
Moreover, the sets $(\mathcal{Z}_{\alpha_1},\mathcal{Z}_{\alpha_2})$ are obviously jointly stationary, and thus it suffices to check the independence condition for $t=0$. 
Note that $D_{\alpha_1} \in \mathcal{Z}_{\alpha_2}$. 
Using  \cite[Lemma 7.2]{1905.07038} gives that
\[
(\mathcal{Z}_{\alpha_1} \circ \tau_{D_{\alpha_1}}, \mathcal{Z}_{\alpha_2}\circ \tau_{D_{\alpha_1}})
\]
is measurable with respect to $\sigma \{ X_{t+D_{\alpha_1}}-X_{D_{\alpha_1}}: t \ge 0 \}$. The same argument yields 
\[
\mathcal{G}_0=\sigma \{ \mathcal{Z}_{\alpha_1} \cap (-\infty,D_{\alpha_1}], \mathcal{Z}_{\alpha_2} \cap (-\infty,D_{\alpha_1}]\} \subseteq \sigma \{ X_t : t \le D_{\alpha_1} \}
\]
An appeal to \cite[Theorem 3.5]{1905.07038} completes the proof.
\end{proof}

\noindent
{\bf Proof of Theorem~\ref{lip:ind}}

As in the proof of  Theorem~\ref{ind:onesided}, it is clear that the process $(Y_\alpha)_{\alpha > |\beta|}$ is nondecreasing and has independent increments.
We leave to the reader the straightforward proof of that this process is c\`adl\`ag. 

We compute the Laplace exponent $\Phi_{\alpha}$ of the subordinator associated with the regenerative set $\mathcal{Z}_{\alpha}$. 
From \cite[Proposition 8.1]{zbMATH06288068} we have
\begin{align*}
\Phi_\alpha(\lambda)=\frac{4(\alpha^2-\beta^2)\lambda}{(\sqrt{2\lambda+(\alpha-\beta)^2}+\alpha-\beta)(\sqrt{2\lambda+(\alpha+\beta)^2}+\alpha+\beta)}.
\end{align*}
Thus, for $\vert \beta \vert < \alpha_1<\alpha_2$, we have
\begin{align*}
\E[e^{-\lambda(Y_{\alpha_2}-Y_{\alpha_1})}]&=c \frac{\Phi_{\alpha_1}(\lambda)}{\Phi_{\alpha_2}(\lambda)}\\
&=c \frac{(\sqrt{2\lambda+(\alpha_2-\beta)^2}+\alpha_2-\beta)(\sqrt{2\lambda+(\alpha_2+\beta)^2}+\alpha_2+\beta)}{(\sqrt{2\lambda+(\alpha_1-\beta)^2}+\alpha_1-\beta)(\sqrt{2\lambda+(\alpha_1+\beta)^2}+\alpha_1+\beta)},
\end{align*}
where
\[
c=\lim_{\lambda \downarrow 0} \frac{(\sqrt{2\lambda+(\alpha_1-\beta)^2}+\alpha_1-\beta)(\sqrt{2\lambda+(\alpha_1+\beta)^2}+\alpha_1+\beta)}{(\sqrt{2\lambda+(\alpha_2-\beta)^2}+\alpha_2-\beta)(\sqrt{2\lambda+(\alpha_2+\beta)^2}+\alpha_2+\beta)};
\]
that is,
\[
c=\frac{\alpha_1^2-\beta^2}{\alpha_2^2-\beta^2}.
\]
Hence,
\[
\log\left(\E\left[e^{-\lambda(Y_{\alpha_2}-Y_{\alpha_1})}\right]\right)=f(a_3)+f(a_4)-f(a_1)-f(a_2),
\]
where $a_1=(\alpha_1+\beta)^{-1}$, $a_2=(\alpha_1-\beta)^{-1}$, $a_3=(\alpha_2+\beta)^{-1}$ and $a_4=(\alpha_2-\beta)^{-1}$, and 
\[
f(x)=-\log(1+\sqrt{2\lambda x^2+1}).
\]
It remains to observe that
\[
f(x)=-\int_{0}^{\infty}(1-e^{-\lambda r})r^{-\frac{1}{2}}\int_{0}^x t^{-2}\phi(t\sqrt{r}) \, dt \, dr
\]
and do a change of variable inside the integral to finish the proof.
\qed

\section{Some real analysis}

\begin{lem}
\lb{deterministic}
Fix a c\`adl\`ag function $f :\mathbb{R} \mapsto \mathbb{R}$ and consider the set
\[
\mathcal{H} :=\{ t \in \reals : f(t)\wedge f(t-)=\inf_{u \le t} f(u)  \}.
\]
\begin{itemize}
\item[(i)]
The set $\mathcal{H}$ coincides with
\[
\{ t \in \reals : f(t)\wedge f(t-) \le \inf_{u \le t} f(u)  \}.
\]
\item[(ii)]
The set $\mathcal{H}$ is closed. 
\item[(iii)]
If $\lim_{t \rightarrow -\infty} f(t)=+\infty$ and $\lim_{t \rightarrow +\infty} f(t)=-\infty$, then the set $\mathcal{H}$ is nonempty and unbounded from above and below. 
\end{itemize}
\end{lem}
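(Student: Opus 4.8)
The plan is to handle the three parts in turn, using part~(i) to obtain a convenient reformulation of membership in $\mathcal{H}$ that streamlines the rest. Throughout I write $g(t):=\inf_{u\le t}f(u)$, which is nonincreasing with $g(t-)=\inf_{u<t}f(u)$. For part~(i), the inequality $f(t)\wedge f(t-)\ge \inf_{u\le t}f(u)$ holds for every $t$: the bound $f(t)\ge\inf_{u\le t}f(u)$ is immediate, while $f(t-)=\lim_{s\uparrow t}f(s)\ge \inf_{u\le t}f(u)$ follows by letting $s\uparrow t$ in $f(s)\ge\inf_{u\le t}f(u)$. Hence the defining equality is equivalent to the inequality $f(t)\wedge f(t-)\le\inf_{u\le t}f(u)$, which is exactly part~(i).

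For part~(ii) I would prove closedness sequentially. Given $t_n\to t$ with $t_n\in\mathcal{H}$, it suffices to treat the cases $t_n\downarrow t$ and $t_n\uparrow t$. The only facts about c\`adl\`ag functions I need are that $f(s)\to f(t)$ and $f(s-)\to f(t)$ as $s\downarrow t$, and $f(s)\to f(t-)$ and $f(s-)\to f(t-)$ as $s\uparrow t$; each follows from the one-sided uniform smallness of $f$ near $t$. In the decreasing case, membership gives $f(t_n)\wedge f(t_n-)=g(t_n)\le g(t)$ (since $t_n>t$ and $g$ is nonincreasing); letting $n\to\infty$ yields $f(t)\le g(t)$, and as always $g(t)\le f(t)$ we get $f(t)=g(t)$, so $t\in\mathcal{H}$. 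In the increasing case, $f(t_n)\wedge f(t_n-)=g(t_n)\to g(t-)$ while the left side tends to $f(t-)$, so $f(t-)=g(t-)=\inf_{u<t}f(u)$; then $f(t)\wedge f(t-)=f(t)\wedge\inf_{u<t}f(u)=\inf_{u\le t}f(u)$, so again $t\in\mathcal{H}$.

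For part~(iii) the crux is the following attainment claim: for every $T$ there is a point $t^*\le T$ with $f(t^*)\wedge f(t^*-)=\inf_{u\le T}f(u)$, and such a $t^*$ lies in $\mathcal{H}$ by part~(i). To prove it, note that $m:=\inf_{u\le T}f(u)$ is finite (bounded above by $f(T)$ and bounded below because $f$ is bounded on compacts and blows up at $-\infty$), and since $f(u)\to+\infty$ as $u\to-\infty$ there is $A\le T$ with $f>m+1$ on $(-\infty,A]$, so $m=\inf_{[A,T]}f$. Taking a minimizing sequence $s_k\to t^*\in[A,T]$ and passing to a monotone subsequence, right-continuity (if $s_k\downarrow t^*$ or $s_k=t^*$) gives $f(t^*)=m$, while approach from the left gives $f(t^*-)=m$; either way $f(t^*)\wedge f(t^*-)\le m\le\inf_{u\le t^*}f(u)$. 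With this in hand, nonemptiness is immediate; for unboundedness below, the minimizer $t^*(T)\le T$ supplies points of $\mathcal{H}$ tending to $-\infty$ as $T\to-\infty$; and for unboundedness above, given any $T$ I use $f(u)\to-\infty$ as $u\to+\infty$ to pick $S$ with $\inf_{u\le S}f<\inf_{u\le T}f$, and observe that the associated minimizer $t^*(S)$ must satisfy $t^*(S)>T$, since any point $p\le T$ has $f(p)\wedge f(p-)\ge\inf_{u\le T}f>\inf_{u\le S}f=f(t^*(S))\wedge f(t^*(S)-)$.

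The main obstacle I anticipate is part~(iii), and specifically the attainment claim: one must argue carefully that the running infimum of a c\`adl\`ag function over $(-\infty,T]$ is genuinely attained in the sense $f(t^*)\wedge f(t^*-)=m$ rather than merely approached, which requires the compactness reduction near $-\infty$ together with the split into approach-from-the-right versus approach-from-the-left. Parts~(i) and~(ii) are routine once the elementary one-sided limit behaviour of $f$ and the monotonicity of $g$ are recorded.
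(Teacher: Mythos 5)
Your proof is correct and follows essentially the same route as the paper's: the same reduction in part (i), the same monotone-subsequence case analysis for closedness, and the same attainment-of-the-infimum argument (via boundedness of the minimizing sequence because $f\to+\infty$ at $-\infty$) for part (iii). The only cosmetic difference is that for unboundedness above the paper uses the first-passage points $s_n=\inf\{t\in\reals: f(t)\le -n\}$, whereas you reuse the minimizer argument over $(-\infty,S]$ for large $S$; both work.
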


\begin{proof} 
(i) Note that $\{ t \in \reals : f(t)\wedge f(t-) \le \inf_{u \le t} f(u)  \}$ is the disjoint union
$\{ t \in \reals : f(t)\wedge f(t-) = \inf_{u \le t} f(u)  \} \sqcup \{ t \in \reals : f(t)\wedge f(t-) < \inf_{u \le t} f(u)  \}$.
Clearly, $ f(t)\wedge f(t-) \ge \inf_{u \le t} f(u)$ for all $t \in \reals$ and so the second set on the right hand side is empty.

\noindent
(ii) We want to show that if $\{t_n\} _{n \in \nats}$ is a sequence of elements of $\mathcal{H}$ converging to some $t^* \in \reals$, then $t^* \in \mathcal{H}$. The result is clear if $t_n = t^*$ infinitely often, so we may suppose that $t^* \notin \{t_n\} _{n \in \nats}$.

Suppose to begin with that there are only finitely many $n \in \mathbb{N}$ such that $t_n<t^*$. Then, for $n$ large enough, we have that $t_n > t^*$ and thus $f(t_n)\wedge f(t_n-) \leq f(u)$ for all $u \leq t^*$. 
Now $\lim_{n \to \infty} f(t_n) = \lim_{n \to \infty} f(t_n-) = f(t^*)$.  Hence, $f(t^*) \wedge f(t^*-) \leq f(t^*) \le f(u)$ for all $u \le t^*$ and so $t^* \in \mathcal{H}$ by part (i). 

Suppose  on the other hand, that the set $\mathcal{N}$ of $n \in \mathbb{N}$ such that $t_n<t^{*}$ is infinite.  For $u < t^*$ we have for large $n \in \mathcal{N}$ sufficiently large that $u \le t_n$ and thus $f(t_n) \wedge f(t_n-) \leq f(u)$. Now the limit as $n \to \infty$ with $n \in \mathcal{N}$ of $f(t_n)\wedge f(t_n-)$ is $f(t^*-)$.  Hence,  $f(t^*)\wedge f(t^*-) \le f(t^*-) \leq \inf_{u < t^*} f(u)$.  This implies that $f(t^*)\wedge f(t^*-) \leq \inf_{u \le t^*} f(u)$ and so $t^* \in \mathcal{H}$ by part (i).

\noindent
(iii)  Fix $M \in \reals$, put $I = \inf_{t \le M} f(t)$, and let $\{t_n\}_{n \in \nats}$ be a sequence of elements of $(-\infty, M]$ such that $\lim_{n \to \infty} f(t_n) = I$.  Because $f(t)$ goes to $+\infty$ as $t\rightarrow -\infty$, the sequence $\{t_n\}_{n \in \nats}$ is bounded and thus admits a subsequence $\{t_{n_k}\}_{k \in \nats}$ that converges to some $t^* \in (-\infty, M]$. By the argument in part (ii), $I \in \{f(t^*), f(t^*-)\}$.  Moreover, $I \le f(t^*)$ and $I \le f(t^*-)$. Thus, $f(t^*)\wedge f(t^*-) = I = \inf_{u \le M} f(u) \le \inf_{u \le t} f(u)$ and $t \in \mathcal{H}$ by part (i). Since $M \in \reals$ is arbitrary it follows that $\mathcal{H}$ is not only nonempty but also unbounded below.

Because  $f(t)$ goes to $+\infty$ as $t\rightarrow -\infty$ and  $f(t)$ goes to $-\infty$ as $t\rightarrow +\infty$, for each $n \in \nats$ we have that the set $\{t \in \reals : f(t) \le -n\}$ is nonempty and bounded below and so  $s_n := \inf\{t \in \reals : f(t) \le -n\} \in \reals$.  The sequence $\{s_n\}_{n \in \nats}$ is clearly nondecreasing and unbounded above.  Now $f(s_n) \wedge f(s_n-) = f(s_n) = \inf\{f(u) : u \le s_n\}$ for all $n \in \nats$
so that $s_n \in \mathcal{H}$ for all $n \in \nats$ and hence $\mathcal{H}$ is unbounded above.  
\end{proof}

\providecommand{\bysame}{\leavevmode\hbox to3em{\hrulefill}\thinspace}
\providecommand{\MR}{\relax\ifhmode\unskip\space\fi MR }
\providecommand{\MRhref}[2]{%
  \href{http://www.ams.org/mathscinet-getitem?mr=#1}{#2}
}
\providecommand{\href}[2]{#2}

\end{document}